\newtheorem{theorem}{Theorem}
\newtheorem{lemma}[theorem]{Lemma}
\newenvironment{proof}{\noindent{\bf Proof.}}{\hspace*{2mm}~$\square$}
\newcommand{\N}{\mathbb{N}}
\newcommand{\Z}{\mathbb{Z}}
\newcommand{\R}{\mathbb{R}}
\newcommand{\Lat}{\mathscr{L}}
\newcommand{\C}{\mathscr{C}}
\newcommand{\norm}[1]{|\!|#1|\!|}
\newcommand{\ind}{\mathbf{1}}
\newcommand{\ep}{\epsilon}
\newcommand{\n}{\hspace*{-5pt}}
\DeclareMathOperator{\card}{card}
\DeclareMathOperator{\vol}{Vol}
\DeclareMathOperator{\de}{Det}
\DeclareMathOperator{\poisson}{Poisson}
\DeclareMathOperator{\exponential}{Exponential}
\begin{document}

\begin{frontmatter}
\title     {Durrett-Levin spatial model of allelopathy}
\runtitle  {Durrett-Levin spatial model of allelopathy}
\author    {Nicolas Lanchier\thanks{Nicolas Lanchier was partially supported by NSF grant CNS-2000792.} and Max Mercer}
\runauthor {Nicolas Lanchier and Max Mercer}
\address   {School of Mathematical and Statistical Sciences \\ Arizona State University \\ Tempe, AZ 85287, USA. \\ nicolas.lanchier@asu.edu \\ mamerce1@asu.edu}

\maketitle

\begin{abstract} \ \
 Allelopathy refers to a type~$0/-$ biological interaction that is neutral for a so-called inhibitory species but detrimental for a so-called susceptible species.
 To model this type of interaction in a spatially-structured environment, Durrett and Levin introduced a variant of the multitype contact process in which the death rate of the susceptible species is density-dependent, increasing with the local density of the inhibitory species.
 Their work combines mean-field analysis and simulations of the spatial model, and our main objective is to give rigorous proofs of some of their conjectures.
 In particular, we give a complete description of the behavior of the mean-field model, including the global stability of the fixed points.
 Our main results for the interacting particle system show the existence of two regimes depending on the relative fitness of the individuals.
 When the inhibitory species is the superior competitor, the inhibitory species always wins, whereas when the susceptible species is the superior competitor, the susceptible species wins if and only if the inhibitory effects do not exceed some critical threshold.
 We also prove that, at least in dimensions~$d \geq 3$, the transition between these two regimes is continuous in the sense that, when both species are equally fit, the inhibitory species wins even in the presence of extremely weak inhibitory effects.
\end{abstract}

\begin{keyword}[class=AMS]
\kwd[Primary ]{60K35}
\end{keyword}

\begin{keyword}
\kwd{Interacting particle systems, allelopathy, coupling, block construction, duality.}
\end{keyword}

\end{frontmatter}


\section{Introduction}
\label{sec:intro}
 Biological interactions among species can be beneficial~$(+)$, neutral~$(0)$ or detrimental~$(-)$ for each of the interacting species.
 Biological interactions that are neutral for one species and detrimental for another species are referred to as amensalism.
 Allelopathy is a particular case of amensalism called antibiosis amensalism in which one species produces toxic biochemicals called allelochemicals that inhibit the growth and survival of the other species.
 The species producing the allelochemicals is called the inhibitory species while the other species is called the susceptible species.
 Because the inhibitory species is immune to the allelochemicals it produces, this mechanism provides a selective advantage to the inhibitory species.
 To model allelopathy in a spatially-structured environment, Durrett and Levin~\cite{Durrett_Levin_1997} introduced an interacting particle system in which each lattice point can be empty or occupied by an individual of one of the two species.
 The system is characterized by the natural birth and death rates of the species as well as the strength of the inhibitory effects.
 The spatial component is included in the form of local interactions assuming that the offspring are sent from the parent's location to nearby lattice points but also that the individuals of the inhibitory species only affect nearby susceptible individuals.
 Their model assumes in addition that offspring can only survive when sent to an empty site, which also models competition for space. \\
\indent
 More precisely, the Durrett-Levin model is a variant of the multitype contact process~\cite{Neuhauser_1992} that models allelopathy by including a density-dependent death rate for the susceptible species.
 Like in the multitype contact process, each site of the~$d$-dimensional lattice is in state
 $$ \begin{array}{rcrcrcl}
     0 & \n = \n & \hbox{empty} && 1 & \n = \n & \hbox{occupied by a type~1 individual~(inhibitory species)} \vspace*{4pt} \\
       & \n   \n & \hbox{or}    && 2 & \n = \n & \hbox{occupied by a type~2 individual~(susceptible species)} \end{array} $$
 so the state of the entire system at time~$t$ is a spatial configuration
 $$ \xi_t : \Z^d \longrightarrow \{0, 1, 2 \} \quad \hbox{where} \quad \xi_t (x) = \hbox{state at site~$x$ at time~$t$}. $$
 To describe the dynamics~(local interactions), we let
 $$ N_x = \{y \in \Z^d : 0 < \norm{x - y} \leq M \} \quad \hbox{for all} \quad x \in \Z^d $$
 be the neighborhood of site~$x$, where~$\norm{\cdot}$ refers to the Euclidean norm and~$M$ is interpreted as a dispersal range.
 For every species~$i$, site~$x$ and configuration~$\xi$, we let
 $$ f_i (x, \xi) = \frac{1}{\card (N_x)} \ \sum_{y \in N_x} \ind \{\xi (y) = i \} $$
 be the fraction of neighbors of site~$x$ that are occupied by an individual of type~$i$.
 Then, the spatial allelopathic model evolves according to the local transitions
\begin{equation}
\label{eq:IPS}
\begin{array}{rclcrcl}
  0 \to 1 & \hbox{at rate} & \beta_1 f_1 (x, \xi) & \qquad & 1 \to 0 & \hbox{at rate} & 1  \vspace*{4pt} \\
  0 \to 2 & \hbox{at rate} & \beta_2 f_2 (x, \xi) & \qquad & 2 \to 0 & \hbox{at rate} & 1 + \gamma f_1 (x, \xi). \end{array}
\end{equation}
 The two transitions on the left indicate that type~$i$ individuals give birth at rate~$\beta_i$ to individuals of their own type.
 The offspring is sent to a site chosen uniformly at random from the parent's neighborhood, and takes place in the system if and only if the target site is empty, which models competition for space.
 The two transitions on the right indicate that the individuals die naturally at rate one regardless of their type but the overall death rate of the susceptible individuals is density-dependent, increasing with respect to the local density of the inhibitory species, which models allelopathy whose strength is measured by the parameter~$\gamma$.
 The particular case~$\gamma = 0$ corresponds to the multitype contact process which was introduced by Neuhauser~\cite{Neuhauser_1992}.
 Another spatial model of allelopathy based on the framework of interacting particle systems was studied more recently by Lanchier~\cite{Lanchier_2005} using a different modeling approach.
 His model is also a variant of Neuhauser's multitype contact process but, contrary to the Durrett-Levin model, the death rates are both spontaneous.
 Instead, the model includes an additional state interpreted as a frozen site, and allelopathy is modeled by assuming that individuals of the inhibitory species produce metabolites at their location, leaving after their death a frozen site that remains inaccessible to the susceptible species~(but not to the inhibitory species) for a random amount of time. \\
\indent
 Returning to the Durrett-Levin model~\eqref{eq:IPS}, assuming that the population is homogeneously mixing, and letting~$u_1$ be the density of the inhibitory species and~$u_2$ be the density of the susceptible species, the process is described by the following deterministic mean-field model:
\begin{equation}
\label{eq:mean-field}
\begin{array}{rcl}
  u_1' & \n = \n & F_1 (u_1, u_2) = \beta_1 u_1 (1 - u_1 - u_2) - u_1 \vspace*{4pt} \\
  u_2' & \n = \n & F_2 (u_1, u_2) =  \beta_2 u_2 (1 - u_1 - u_2) - (1 + \gamma u_1) u_2.
\end{array}
\end{equation}
 Durrett and Levin~\cite{Durrett_Levin_1997} proved that, whenever
\begin{equation}
\label{eq:bistable}
\beta_1, \beta_2 > 1 \quad \hbox{and} \quad \beta_1 < \beta_2 < (1 + \gamma) \beta_1 - \gamma,
\end{equation}
 the system has two locally stable boundary fixed points, one with a positive density of~1s and one with a positive density of~2s, as well as one interior fixed point.
 Their numerical simulations also show that the system is bistable in the sense that the interior fixed point is a saddle point and the system converges to one of the two boundary fixed points, which is also suggested by the solution curves in Figure~\ref{fig:MF}.
\begin{figure}[t!]
\centering
\scalebox{0.35}{\input{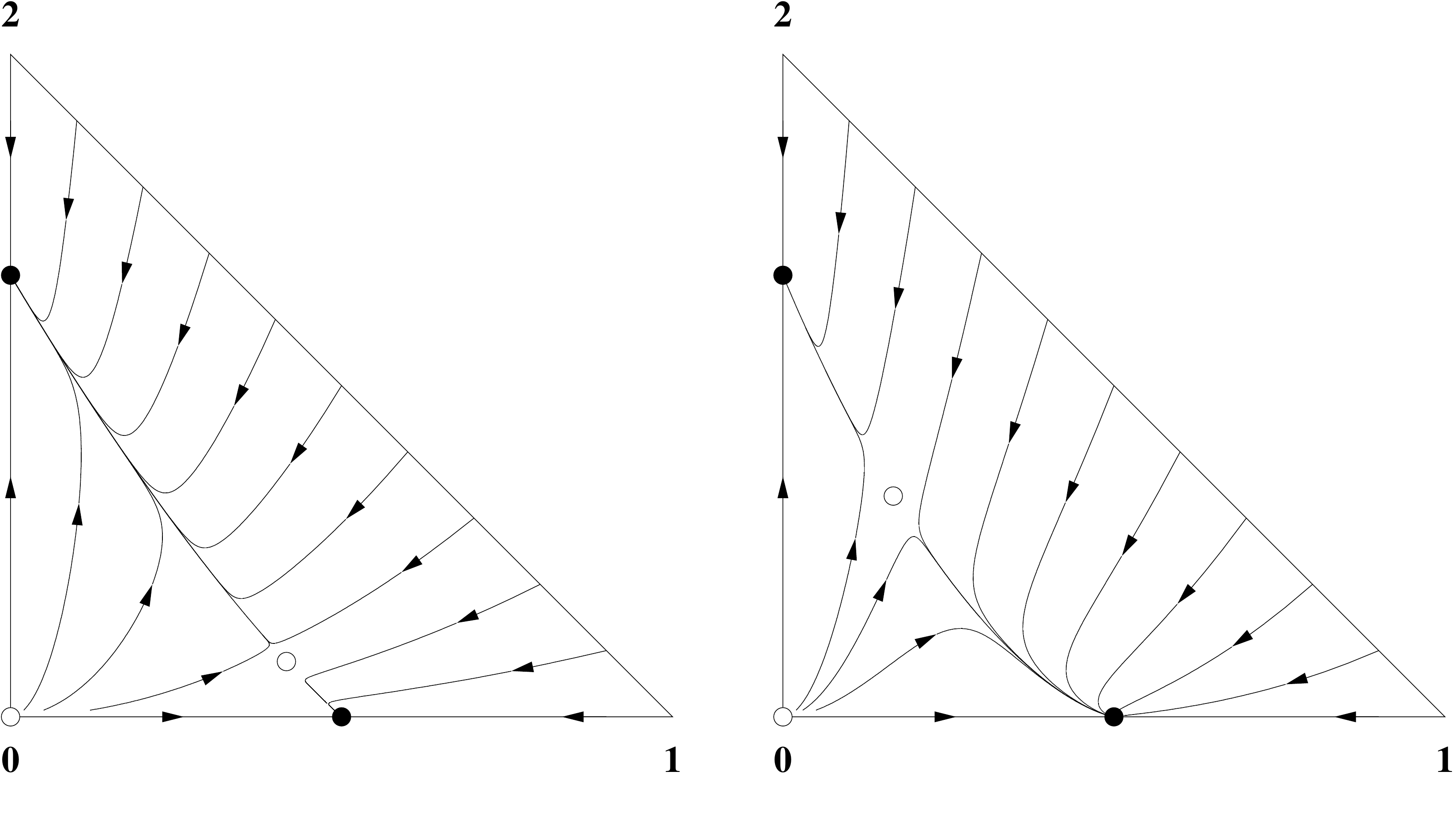_t}}
\caption{\upshape{
 Solution curves of the mean-field model for two different values of~$\gamma$ and the same birth rates.
 The boundary fixed points are the same but the basin of attraction of the boundary fixed point in which only the inhibitory/susceptible species is present increases/decreases with~$\gamma$.}}
\label{fig:MF}
\end{figure}
 In particular, in the parameter region~\eqref{eq:bistable}, none of the species can invade the other species in its equilibrium, and the outcome of the competition depends on the initial density of each species.
 In Section~\ref{sec:mean-field}, we will identify all the fixed points, study their local stability, and use the Bendixson--Dulac theorem to give a rigorous proof of this result and a complete picture of the phase structure of the mean-field model. \\
\indent
 Our analysis together with numerical simulations shows that the spatial model has a number of similarities with its mean-field approximation but also one major difference:
 the parameter region where the mean-field model is bistable is turned into a region where there is a strong type that wins even when starting at low density in the spatial model.
 In particular, the outcome of the competition depends on the parameters of the system but not on the initial densities.
 To state our results and explore the phase structure, we say that species~$i$
 $$ \begin{array}{rcrcl}
    \hbox{survives} & \ \hbox{when} & \liminf_{t \to \infty} P (\xi_t (x) = i) > 0 & \hbox{for all} & x \in \Z^d \vspace*{4pt} \\
    \hbox{dies out} & \ \hbox{when} &    \lim_{t \to \infty} P (\xi_t (x) = i) = 0 & \hbox{for all} & x \in \Z^d \end{array} $$
 and wins when it survives while the other species dies out.
 In addition, we say that both species coexist when they both survive.
 Letting~$\beta_c$ be the critical value of the contact process described below, the behavior of the allelopathic model outside the parameter region~$\beta_2 \geq \beta_1 > \beta_c$ can be easily deduced from coupling arguments to compare the spatial allelopathic model with Harris' contact process and Neuhauser's multitype contact process.
 In the absence of one species, the other species, say species~1, behaves like Harris' contact process~\cite{Harris_1974}, the interacting particle system in which sites can be either~0~=~empty or~1~=~occupied, with local transitions
 $$ \begin{array}{rclcrcl}
      0 \to 1 & \hbox{at rate} & \beta_1 f_1 (x, \xi) & \quad \hbox{and} \quad & 1 \to 0 & \hbox{at rate} & 1. \end{array} $$
 As one of the most popular models of interacting particle systems, this process was intensively studied, and we refer to Liggett~\cite{Liggett_1985,Liggett_1999} for a review of the main results.
 In particular, there exists a nondegenerate~(positive and finite) critical value~$\beta_c$ that depends on both the spatial dimension and the range of the interactions such that, starting with infinitely many~1s,
 $$ \begin{array}{rcl}
    \beta_1 \leq \beta_c & \Longrightarrow & \hbox{species 1 dies out} \vspace*{4pt} \\
       \beta_1 > \beta_c & \Longrightarrow & \hbox{species 1 survives}. \end{array} $$
 The proof of extinction at the critical value~$\beta_c$ is due to Bezuidenhout and Grimmett~\cite{Bezuidenhout_Grimmett_1990}.
 To study the competition between both species, we now assume that the process starts from a translation invariant measure with a positive density of both species, so the initial configuration contains infinitely many individuals of both types.
 In this case, the set of occupied sites is dominated by a contact process with parameter~$\beta_1 \vee \beta_2$ from which it follows that
\begin{equation}
\label{eq:extinction}
\begin{array}{rcl} \beta_1 \leq \beta_c \quad \hbox{and} \quad \beta_2 \leq \beta_c  & \Longrightarrow & \hbox{both species die out}. \end{array}
\end{equation}
 Using another result of Bezuidenhout and Grimmett~\cite{Bezuidenhout_Grimmett_1991} about the exponential decay of the subcritical contact process implies that, when a species is subcritical, say species~2, it dies out fast enough that species~1 behaves eventually as in the absence of species~2.
 In particular,
\begin{equation}
\label{eq:contact}
\begin{array}{rcl}
\hbox{(a)} \ \ \beta_1 > \beta_c \quad \hbox{and} \quad \beta_2 \leq \beta_c & \Longrightarrow & \hbox{species 1 wins} \vspace*{4pt} \\
\hbox{(b)} \ \ \beta_1 \leq \beta_c \quad \hbox{and} \quad \beta_2 > \beta_c & \Longrightarrow & \hbox{species 2 wins}. \end{array}
\end{equation}
 We now assume that the two birth rates are supercritical, in which case the long-term behavior depends on the relative fitness of the two species and the strength of allelopathy.
 Standard coupling arguments show that the set of~1s is nondecreasing with respect to the birth rate~$\beta_1$ and the parameter~$\gamma$ and nonincreasing with respect to the birth rate~$\beta_2$, and similarly but with the monotonicity reversed for the set of~2s.
 This implies that, two of the parameters being fixed, there is at most one phase transition from extinction to survival of a given species in the direction of the third parameter.
 In addition, in the absence of inhibitory effects~$\gamma = 0$, the process reduces to the multitype contact process, and a result of Neuhauser~\cite{Neuhauser_1992} based on duality techniques shows that the species with the larger birth rate wins.
\begin{figure}[t!]
\centering
\scalebox{0.35}{\input{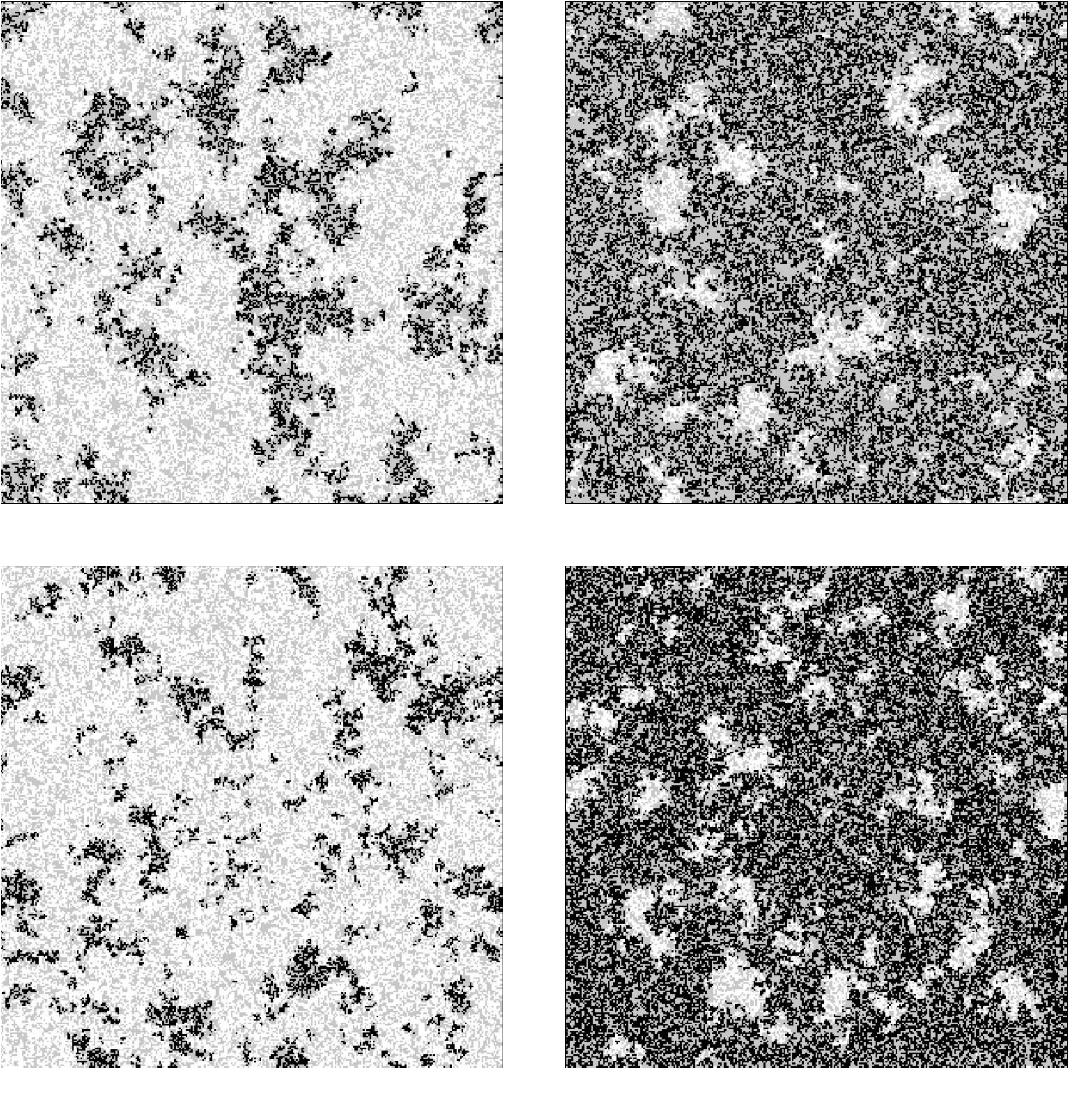_t}}
\caption{\upshape{
 Snapshots of the process on the~$300 \times 300$ torus at time~200.
 In all the pictures, the birth rate of the susceptible species is equal to~$\beta_2 = 3$, the black sites represent the inhibitory species and the white sites the susceptible species.
 Note that, while decreasing~$\beta_1$ and increasing~$\gamma$, the clusters of the inhibitory species are less dense but the susceptible species cannot easily invade these clusters due to the strength of allelopathy measured~$\gamma$.}}
\label{fig:IPS}
\end{figure}
\begin{figure}[t!]
\centering
\scalebox{0.35}{\input{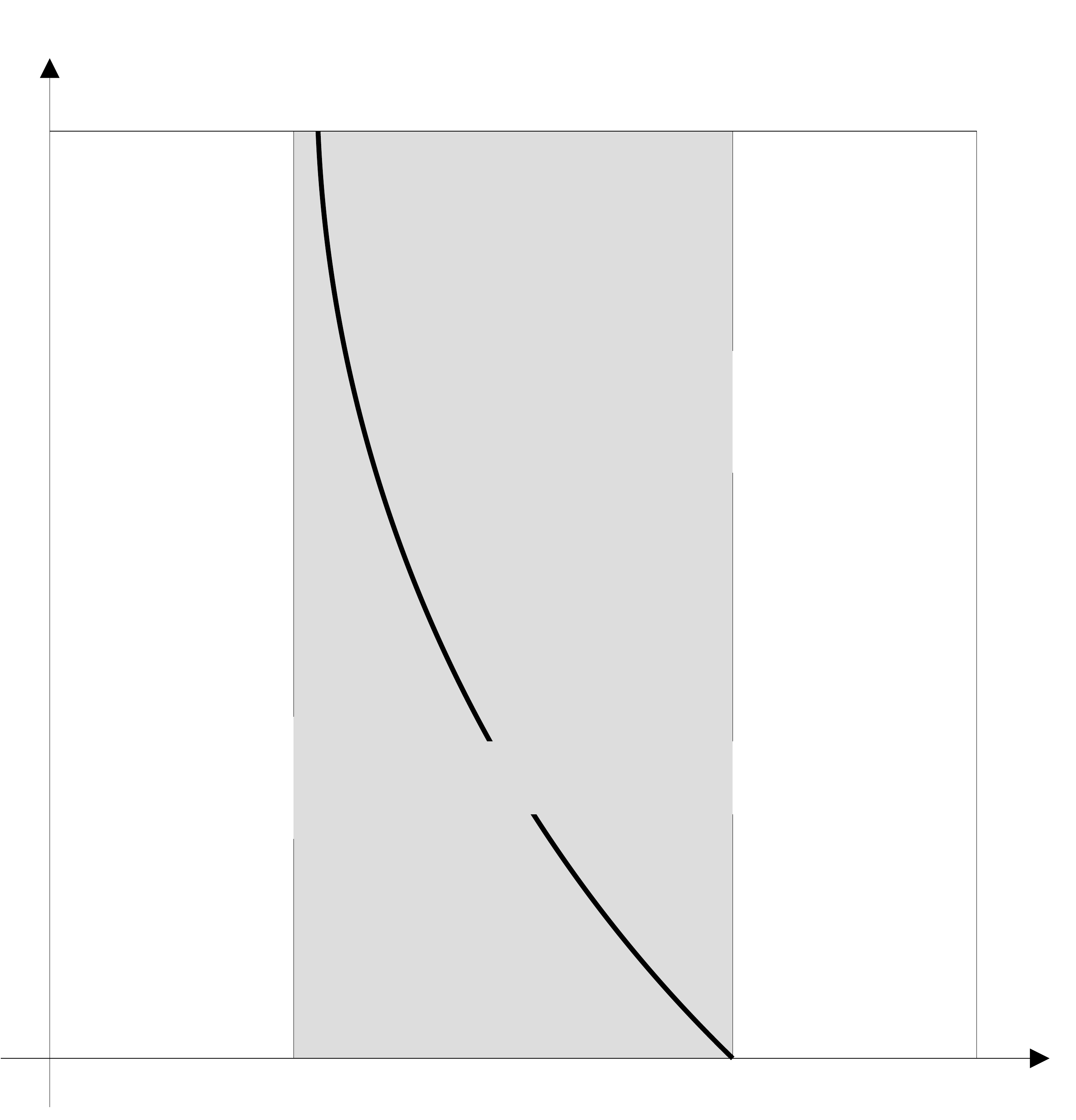_t}}
\caption{\upshape{
 Phase structure of the process for a fixed~$\beta_2 > \beta_c$.}}
\label{fig:PS1}
\end{figure}
\begin{figure}[t!]
\centering
\scalebox{0.35}{\input{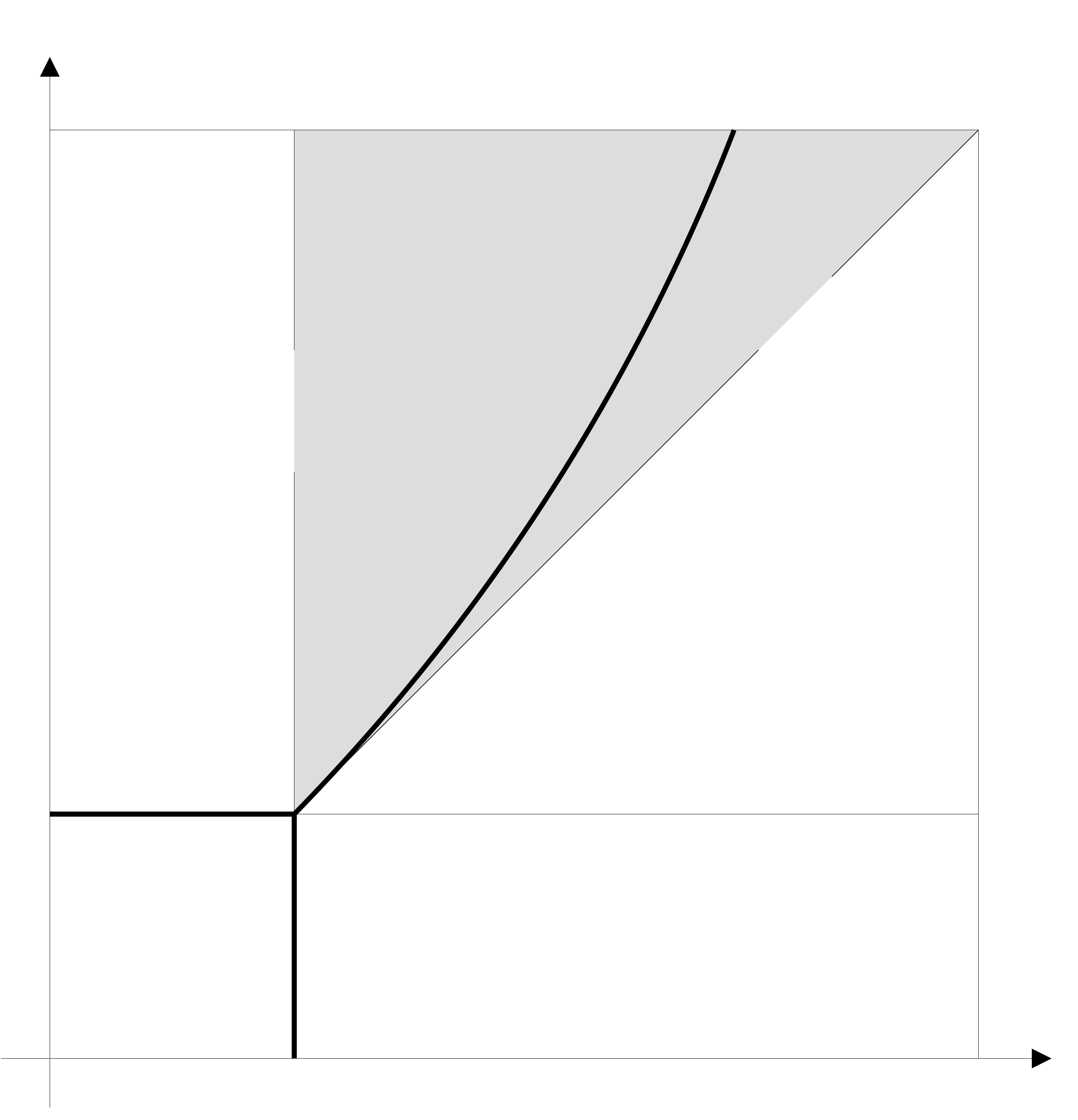_t}}
\caption{\upshape{
 Phase structure of the process for a fixed~$\gamma > 0$.}}
\label{fig:PS2}
\end{figure}
 This, together with the monotonicity with respect to~$\gamma$, implies that the inhibitory species wins whenever it is the fittest:
\begin{equation}
\label{eq:MCP-coupling}
\begin{array}{rcl}
\beta_1 > \beta_2 > \beta_c & \Longrightarrow & \hbox{species 1 wins}. \end{array}
\end{equation}
 Combining~\eqref{eq:extinction}--\eqref{eq:MCP-coupling} gives a complete description of the long-term behavior of the spatial stochastic model outside the parameter region~$\beta_2 \geq \beta_1 > \beta_c$. \\
\indent
 We now look at the behavior of the process in the parameter region~$\beta_2 \geq \beta_1 > \beta_c$, which is represented by the gray rectangle in Figure~\ref{fig:PS1} and the gray triangle in Figure~\ref{fig:PS2}.
 In this case, the long-term behavior also depends on the strength of allelopathy:
 there is a critical value for~$\gamma$ below which the susceptible species wins and above which the inhibitory species survives.
 Neuhauser's results were later improved by Durrett and Neuhauser~\cite{Durrett_Neuhauser_1997} using a block construction to control the rate of invasion of the superior competitor.
 Applying a standard perturbation argument to their construction in the scenario where the susceptible species is the superior competitor implies that the susceptible species still wins when the inhibitory effects are sufficiently weak:
\begin{theorem}
\label{th:MCP-block}
 For all~$\beta_2 > \beta_1 > \beta_c$, there is~$\gamma_- > 0$ such that
\begin{equation}
\label{eq:MCP-block}
\begin{array}{rcl}
\gamma < \gamma_- & \Longrightarrow & \hbox{species 2 wins}. \end{array}
\end{equation}
\end{theorem}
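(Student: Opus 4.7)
The plan is a perturbation of the Durrett--Neuhauser block construction. When $\gamma = 0$, the process~\eqref{eq:IPS} reduces to Neuhauser's multitype contact process, for which Durrett and Neuhauser~\cite{Durrett_Neuhauser_1997} proved that species~2 wins whenever $\beta_2 > \beta_1 > \beta_c$. Their proof yields a comparison with supercritical one-dependent oriented percolation by constructing, for $L$ and $T$ large, a ``good block'' event $G$ that (i) is measurable with respect to the graphical representation in a finite space-time box $B_L = [-L, L]^d \times [0, T]$, (ii) depends on the initial configuration only through its restriction to $[-L, L]^d$, and (iii) on its occurrence, guarantees that species~2 has invaded and locally displaced species~1 in two neighboring translates of the initial region. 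Their construction gives $P (G) \geq 1 - \epsilon$ for any prescribed $\epsilon > 0$ when $L$ and $T$ are taken large enough.

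To extend this to $\gamma > 0$, I would couple the allelopathic process and the multitype contact process via a joint graphical representation: to the Poisson clocks driving the $\gamma = 0$ dynamics, one adds an independent Poisson process $\Pi_x$ of rate $\gamma$ on each site $x$, whose marks kill a type~2 individual at $x$ with conditional probability $f_1 (x, \xi_{t-})$. This reproduces faithfully the density-dependent death rate $\gamma f_1 (x, \xi)$ of species~2 while leaving the $\gamma = 0$ transitions untouched. On the event that no mark of $\bigcup_{x \in [-L, L]^d} \Pi_x$ falls in $[0, T]$, which has probability at least $1 - \gamma (2L + 1)^d T$, the two coupled processes agree throughout $B_L$, and in particular the good block event has the same truth value for both. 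Choosing $L, T$ so that $P (G) \geq 1 - \epsilon / 2$ for the multitype contact process and then setting $\gamma_- = \epsilon / (2 (2L + 1)^d T)$, one obtains $P (G) \geq 1 - \epsilon$ for every $\gamma < \gamma_-$. Taking $\epsilon$ small enough for supercritical one-dependent percolation, the standard comparison yields simultaneously survival of species~2 and local extinction of species~1, which is precisely what it means for species~2 to win.

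The main obstacle is not conceptual but bookkeeping: one must verify that the Durrett--Neuhauser good block event really is supported on a finite space-time window and really does witness local displacement of species~1 rather than mere coexistence. Both properties are built into their construction, which inside $B_L$ compares the type~2 dynamics to a $\beta_2$-contact process invading a region that has been emptied by the inevitable turnover of the $\beta_1$-contact process representing species~1; translating these features into the present perturbation framework amounts to a careful reading of~\cite{Durrett_Neuhauser_1997}, after which the coupling and percolation steps sketched above are essentially mechanical.
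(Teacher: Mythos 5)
Your proposal follows essentially the same route as the paper: a perturbation of the Durrett--Neuhauser block construction in which the inhibition marks are suppressed inside each finite space-time block with probability at least $1 - \gamma$ times the volume of the block, yielding the same good events with probability $1 - \ep$ for all $\gamma < \gamma_-$ and hence the standard comparison with supercritical oriented site percolation. The only step you compress is the deduction of extinction of species~1: survival of the wet sites alone does not suffice, and the paper additionally invokes the non-percolation of closed sites for $\ep$ small (Durrett, 1992) together with the fact that 1s cannot appear spontaneously, so that a type~1 individual in a block at level~$n$ forces a directed path of closed sites down to level zero, an event of exponentially small probability.
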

 In contrast, using that the inhibitory species does not feel the presence of the susceptible species in the limiting case~$\gamma = \infty$, together with a block construction and a perturbation argument, shows that the inhibitory species survives when the inhibitory effects are sufficiently strong:
\begin{theorem}
\label{th:CP}
 For all~$\beta_2 > \beta_1 > \beta_c$, there is~$\gamma_+ < \infty$ such that
\begin{equation}
\label{eq:CP}
\begin{array}{rcl}
\gamma > \gamma_+ & \Longrightarrow & \hbox{species 1 survives}. \end{array}
\end{equation}
 Species~1 wins if in addition~$M = d = 1$.
\end{theorem}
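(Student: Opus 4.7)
The plan is to exploit the fact that as $\gamma\to\infty$, a type-2 site with at least one type-1 neighbor dies at rate $\ge\gamma/\card(N_x)$, so on any bounded space-time window such 2s vanish essentially instantly. Since the 2s enter the 1-dynamics only by occupying sites at which a 1-birth is attempted, a large $\gamma$ effectively clears these obstacles and the 1s should behave like a supercritical contact process with parameter $\beta_1>\beta_c$.

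To prove survival, first apply the Bezuidenhout--Grimmett block construction to the supercritical 1-contact process: there exist integers $L,T,K$ and an arbitrarily small $\delta>0$ such that, starting from at least $K$ type-1 individuals in $[-L,L]^d$, at time $T$ there are at least $K$ type-1 individuals in each of the appropriately translated boxes with probability $\ge 1-\delta$. This event depends only on the graphical representation inside a bounded space-time box $B$. The perturbation step is to couple the allelopathic process with the pure 1-contact process on $B$ and bound the probability that they disagree: the only discrepancies come from 1-births that succeed in the pure process but fail in the allelopathic model because the target site is occupied by a 2, and any 2 adjacent to a 1 dies within time $t_0$ with probability $\ge 1-\exp(-c\gamma t_0)$. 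A union bound over the almost surely finite collection of attempted 1-births in $B$ shows that this coupling succeeds with probability $\ge 1-\delta'$ for $\gamma$ sufficiently large. Combined with the standard comparison with supercritical oriented site percolation, this yields survival of species~1, so $\gamma_+<\infty$ exists.

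For the stronger claim when $M=d=1$, one further shows that species~2 dies out. Letting $r_t=\sup\{x:\xi_t(x)=1\}$ and $l_t=\inf\{x:\xi_t(x)=1\}$, a half-space version of the block construction above shows that $r_t\to+\infty$ and $l_t\to-\infty$ at positive linear speed whenever $\gamma>\gamma_+$. Moreover, as soon as a 1 appears at a neighbor of a site $x$ occupied by a 2, the 2 at $x$ dies at rate $\ge\gamma/2$ before $x$ is touched by any other event, with probability tending to one in $\gamma$. Hence behind the advancing edges no 2s survive, and for every fixed site $x$ we conclude that $P(\xi_t(x)=2)\to 0$ as $t\to\infty$, i.e., species~1 wins.

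The main obstacle is the perturbation estimate inside $B$: the block construction requires uniform high-probability control over all the finitely many graphical events in $B$ that could be affected by a 2, and this probability must be made arbitrarily small by taking $\gamma$ large. This is handled by noting that the number of attempted 1-births in $B$ is a Poisson variable with finite mean and the probability that any given 2 ever blocks such a birth decays exponentially in $\gamma$, so a union bound yields the required estimate provided $\gamma_+$ is chosen large enough.
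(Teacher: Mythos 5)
Your survival argument follows essentially the same route as the paper: a block construction for the supercritical contact process with parameter $\beta_1$ (the paper reuses the Durrett--Neuhauser version of the Bezuidenhout--Grimmett construction), followed by a perturbation in $\gamma$ showing that on each space-time block, with probability close to one, every 2 that could block a type-1 birth is killed first, and then the standard comparison with supercritical oriented site percolation. Two quantitative remarks. First, the probability that a given 2 blocks a given 1-birth is a race between an $\exponential (\gamma / N)$ kill clock and an $\exponential (\beta_1)$ birth clock, so it is of order $1/\gamma$, not exponentially small in $\gamma$; this does not affect the conclusion. Second, to make the blocking event measurable with respect to the graphical representation alone (one does not know in advance which neighbors of the target site carry 1s), the paper demands kill arrows from all $N$ neighbors before any type-1 arrow arrives; your phrasing ``any 2 adjacent to a 1 dies at rate $\geq \gamma/2$'' presupposes knowledge of the configuration and should be replaced by such a configuration-free event.

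The genuine gap is in the $M = d = 1$ part. The claim that behind the advancing edges no 2s survive does not follow from the observation that a 2 next to a 1 dies before anything else happens ``with probability tending to one in $\gamma$.'' For a fixed $\gamma > \gamma_+$ this is a per-event estimate with failure probability of order $1/\gamma > 0$; over an infinite time horizon infinitely many such races occur, so some 2s win some of them, and in any case a 2 strictly behind the front need not ever be adjacent to a 1 (the 1s may have locally died out there), so the enhanced death rate need not apply to it at all. The step that actually kills the 2s in the paper is not a large-$\gamma$ estimate but a deterministic planarity argument special to $M = d = 1$: invasion paths in the graphical representation cannot jump over or go around one another in one dimension with nearest-neighbor interactions, so the cluster of good sites that expands linearly in both directions is void of 2s, and every fixed site is eventually absorbed into it, giving $P (\xi_t (x) = 2) \to 0$. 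Some such path-ordering argument is needed to rule out pockets of 2s persisting between the 1s; the killing mechanism alone does not provide it.
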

 Numerical simulations suggest that clusters quickly form and either the susceptible species or the inhibitory species wins.
 In particular, we conjecture that, like in the mean-field model, coexistence is not possible except maybe for a parameter region with measure zero.
 This, together with the previous two theorems and the monotonicity with respect to~$\gamma$, implies that for all~$\beta_2 > \beta_1 > \beta_c$, there exists a nondegenerate critical value~$\gamma_c$ such that
\begin{equation}
\label{eq:gammac}
\begin{array}{rcl}
\gamma < \gamma_c & \Longrightarrow & \hbox{species 2 wins} \vspace*{4pt} \\
\gamma > \gamma_c & \Longrightarrow & \hbox{species 1 wins}. \end{array}
\end{equation}
 Figure~\ref{fig:IPS} shows realizations of the process near the phase transition, with~$\gamma$ slightly subcritical in the two pictures on the left where the susceptible species wins and~$\gamma$ slightly supercritical in the two pictures on the right where the inhibitory species wins.
 Combining~\eqref{eq:MCP-coupling} and~\eqref{eq:gammac} shows that when the susceptible species is the better competitor there is a phase transition at some nondegenerate critical value~$\gamma_c$ whereas when the inhibitory species is the better competitor the critical value~$\gamma_c$ is degenerate, equal to zero.
 We conjecture that the transition between these two regimes is continuous at point~$\beta_1 = \beta_2$ in the sense that, when the species are equally fit, the inhibitory species wins even in the presence of very weak inhibitory effects, meaning that the critical value~$\gamma_c$ is again degenerate in the symmetric case.
 Using duality techniques inspired from Neuhauser~\cite{Neuhauser_1992}~(tree structure of the dual process, ancestor hierarchy, renewal points, etc.) and the transience of the first ancestor in high dimensions, we were able to prove this conjecture when~$d \geq 3$.
\begin{theorem}
\label{th:MCP-dual}
 Assume that~$\beta_1, \beta_2 > \beta_c$. Then,
\begin{equation}
\label{eq:MCP-dual}
\begin{array}{rcl}
\beta_1 = \beta_2 \quad \hbox{and} \quad \gamma > 0 & \Longrightarrow & \hbox{species 1 wins in~$d \geq 3$}. \end{array}
\end{equation}
\end{theorem}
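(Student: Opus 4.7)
The plan is to adapt Neuhauser's duality framework for the multitype contact process to the allelopathic setting. I would represent the process graphically by placing, for each ordered pair $(y,x)$ of neighboring sites, independent Poisson processes of birth arrows at rate $\beta / \card(N_x)$ with $\beta := \beta_1 = \beta_2$ and of \emph{inhibition marks} at rate $\gamma / \card(N_x)$, together with the usual death marks at rate~$1$ at each site. An inhibition mark from~$y$ to~$x$ at time~$s$ kills the occupant of~$x$ if and only if $\xi_{s^-}(x) = 2$ and $\xi_{s^-}(y) = 1$, and has no effect otherwise. Since $\beta_1 = \beta_2$, the color-free birth and death marks generate a supercritical Harris contact process on the set of occupied sites, and Neuhauser's first-ancestor path $(\alpha_k, \tau_k)_{k \ge 0}$ can be constructed from them exactly as in the multitype case: in the absence of inhibition marks, the type at $(x,t)$ equals the type at the foot of that path at time~$0$.

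The first step is a uniform lower bound on the density of type~$1$. Since inhibition marks can only remove type~$2$ individuals, the set of type-$1$ sites dominates stochastically an autonomous Harris contact process of parameter $\beta > \beta_c$; starting from any translation invariant law with a positive density of~$1$s, this contact process converges to its upper invariant measure of density $\rho^* > 0$, so $\liminf_{t \to \infty} P(\xi_t(x) = 1) \ge \rho^*$.

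The heart of the proof is to show $P(\xi_t(x) = 2) \to 0$. Fixing $(x,t)$ and tracing the first-ancestor path backwards, Neuhauser's renewal-point decomposition expresses the path as a sequence of i.i.d.\ excursions; in dimension $d \ge 3$ the transience of the first ancestor ensures both that the number of renewals up to time~$0$ grows linearly in~$t$ and that successive excursions sit in essentially disjoint regions of space--time. On the event $\{\xi_t(x) = 2\}$, every inhibition mark landing on the ancestor path must be inactive, i.e.\ its source site must not be of type~$1$ at the relevant time. The aim is to show that the probability of this happening on a given renewal excursion is bounded by some $p < 1$, so that chaining the excursions yields $P(\xi_t(x) = 2) \le e^{-ct}$ for some $c = c(\gamma, \rho^*) > 0$.

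The main obstacle is the final step above: a priori the ancestor path is correlated with the surrounding type-$1$ configuration, since both are functions of the same graphical randomness, so one cannot immediately read the activation probability off the density $\rho^*$ of step~$1$. The plan is to use the transience of the first ancestor in $d \ge 3$ to argue that distinct renewal excursions probe disjoint spatial regions, so that the type-$1$ environment on each excursion can be coupled with an independent sample from the equilibrium produced in step~$1$, with coupling errors controlled by the intersection local time of the ancestor walk. This is precisely the step at which the method requires $d \ge 3$, matching the restriction in the statement and explaining why the symmetric-case conjecture would remain open in $d = 1, 2$.
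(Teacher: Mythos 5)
Your first step fails: the set of type-1 sites does \emph{not} stochastically dominate an autonomous Harris contact process with parameter $\beta$. Inhibition marks only remove 2s, true, but the births of the 1s are blocked by the 2s, so in the natural coupling a contact process run from the initial 1s alone has \emph{more} occupied sites than the set of 1s in the allelopathic model, not fewer --- the domination goes the wrong way. (If the 1s did dominate a supercritical contact process from below, Neuhauser's theorem for the multitype contact process would be immediate, which it is not.) This is not cosmetic, because your third step feeds the resulting equilibrium density $\rho^*$ into the estimate that each renewal excursion of the ancestor path carries an \emph{active} inhibition mark with probability bounded away from zero; without a lower bound on the density of 1s near the ancestor path at intermediate times, that estimate has no starting point. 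You have also correctly identified, but not resolved, the second problem: the ancestor path and the ambient type-1 configuration are built from the same graphical randomness, and the proposed coupling of each excursion's environment with an independent equilibrium sample is exactly the part that would need a proof.

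The paper circumvents both difficulties by never appealing to the density of 1s at intermediate times. Along the first-ancestor path of $(x,t)$ it isolates space-time windows in which a kill arrow from a neighboring site $x_i$ into the path is immediately followed by a birth arrow from $x_i$, with no other marks in the window; on such an event, \emph{if} $(x_i,\sigma_i)$ is of type 1, then the point on the path is forced to be of type 1 and hence so is $(x,t)$. To certify that some $(x_i,\sigma_i)$ is of type 1, it launches a new first-ancestor walk from each such point that lives forever and uses the transience of the ancestor walk in $d \geq 3$ (Neuhauser's Lemma 5.5) to show that arbitrarily many of these walks never coalesce and therefore land on arbitrarily many distinct sites at time 0; a translation-invariant initial law with a positive density of 1s then guarantees, via Harris's correlation lemma, that at least one landing site is initially a 1. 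Everything is referred back to the initial configuration, where the density of 1s is known, rather than to an unproved intermediate-time equilibrium; survival of species 1 then comes from the fact that the set of occupied sites is a supercritical contact process and occupied sites are, with high probability, of type 1. Your outline would need to be reorganized along these lines, or you would need an independent proof that the 1s maintain a positive density uniformly in time --- which is essentially the theorem itself.
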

 We now look at the effects of the birth rate of the susceptible species.
 In the limiting case~$\beta_2 = \infty$, connected components of~2s with at least two individuals can only expand because each time a type~2 individual dies the resulting empty site is instantaneously reinvaded by the offspring of a nearby type~2 individual.
 This implies that, even if the inhibitory effects are strong, the susceptible species still wins provided it is a good enough competitor:
\begin{theorem}
\label{th:strong-2}
 For all~$\beta_1, \gamma > 0$, there is~$\beta_+ < \infty$ such that
\begin{equation}
\label{eq:strong-2}
\begin{array}{rcl}
\beta_2 > \beta_+ & \Longrightarrow & \hbox{species 2 wins}. \end{array}
\end{equation}
\end{theorem}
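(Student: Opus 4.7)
The plan is a block construction that compares the process with a supercritical oriented site percolation on $\Z^d \times \Z_+$ of density arbitrarily close to $1$, with $1/\beta_2$ playing the role of the small perturbation parameter. As suggested by the discussion preceding the statement, the starting point is the degenerate limit $\beta_2 = \infty$, in which every empty site adjacent to a type 2 individual becomes a 2 instantaneously. In this limit, a type 2 individual at $x$ can only stop being a 2 if, at the moment it dies, it has no 2 neighbor; hence as soon as the configuration contains two adjacent 2s, this pair persists forever, each death being instantly repaired by the neighbor. Moreover, every type 1 individual adjacent to a cluster of size at least two dies at rate $1$ and is immediately replaced by a 2, so starting from two adjacent 2s inside a box $B \subset \Z^d$ the cluster grows monotonically and fills $B$ in almost surely finite time. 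Fixing $\epsilon > 0$, there is therefore a deterministic $T = T(B, \beta_1, \epsilon)$ such that this fill-up event has probability at least $1 - \epsilon/2$ in the limit process.

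The second step is to transfer this block event to large finite $\beta_2$ by a direct coupling on the graphical representation restricted to $B \times [0, T]$. The key is a separation of time scales: the Poisson rates governing 1-births, the natural deaths, and the allelopathic contribution are all uniformly bounded by $1 + \gamma + \beta_1$, so the number of such events in $B \times [0, T]$ is stochastically tight in $\beta_2$. On the other hand, once a type 2 site $x$ has a type 2 neighbor $y$, the waiting time until the next 2-birth arrow from $y$ to $x$ is exponential with mean $\card(N_x)/\beta_2$, which vanishes as $\beta_2 \to \infty$. A union bound over the boundedly many relevant transitions in $B \times [0, T]$ shows that, for $\beta_2$ large enough, the finite-$\beta_2$ evolution and its degenerate limit agree inside $B \times [0, T]$ up to a set of probability at most $\epsilon/2$, uniformly in the initial configuration outside $B$. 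Hence the block event that $B$ contains only 2s at time $T$ has probability at least $1 - \epsilon$.

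Given the block event, a standard block construction then yields a comparison with an oriented site percolation of density $1 - \epsilon$ on a coarse-grained lattice. The box $B$ and the space-time shifts are chosen so that a filled $B$ contains two adjacent 2s inside each descendant block, which allows the construction to iterate. Since $\epsilon$ can be made arbitrarily small, the resulting percolation is supercritical with density close to one, and the comparison implies both that species 2 survives and that every fixed site $x \in \Z^d$ is eventually covered by a filled block, which forces extinction of species 1 and proves that species 2 wins. The main obstacle is precisely the perturbation step: because the $\beta_2 = \infty$ limit has infinite transition rates, standard weak-convergence results for continuous-time Markov chains do not apply, and one must work directly on the graphical representation exploiting the separation of time scales described above.
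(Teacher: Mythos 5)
Your proposal follows essentially the same route as the paper: analyze the $\beta_2=\infty$ limit in which a pair of adjacent 2s persists and fills a fixed box in finite time, transfer this to large finite $\beta_2$ by a perturbation/coupling argument on the graphical representation, and run a block construction dominating supercritical oriented site percolation (the paper implements the locality you allude to via explicit boxes $\Lambda_-\subset\Lambda_+$ and a modified process with 2-births suppressed outside $\Lambda_+$, so the good event is measurable with respect to a bounded space-time region). The one imprecision is your claim that every fixed site is eventually covered by a filled block --- false even at density $1-\epsilon$ --- but the intended conclusion (extinction of species 1) follows by the standard argument the paper uses: since 1s cannot appear spontaneously, a 1 in a block forces a directed path of closed sites down to level zero, whose probability decays exponentially by the lack of percolation of closed sites when $\epsilon$ is small.
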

 The theorem only gives the existence of a finite~(possibly very large) critical value~$\beta_+$.
 Our last result shows that, in the presence of long range interactions, the allelopathic model can be coupled with the grass-bush-tree system~\cite{Durrett_Schinazi_1993,Durrett_Swindle_1991} to identify an explicit parameter region in which the susceptible species survives.
 More precisely, we have the following.
\begin{theorem}
\label{th:GBT}
 For all~$\beta_2 > \beta_1^2 > 1$ and~$\gamma \leq \beta_1$, there is~$M_0 < \infty$ such that
\begin{equation}
\label{eq:GBT}
\begin{array}{rcl}
 M > M_0 & \Longrightarrow & \hbox{species 2 survives}. \end{array}
\end{equation}
\end{theorem}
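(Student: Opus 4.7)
The plan is to couple the allelopathic process~$\xi_t$ with a standard grass-bush-tree process~$\zeta_t \in \{0,1,2\}^{\Z^d}$ whose dynamics are
\begin{equation*}
\begin{array}{rclcrcl}
  0 \to 1 & \hbox{at rate} & \beta_1 f_1(x,\zeta), & & 2 \to 1            & \hbox{at rate} & \beta_1 f_1(x,\zeta), \vspace*{4pt} \\
  0 \to 2 & \hbox{at rate} & \beta_2 f_2(x,\zeta), & & 1 \to 0, \ 2 \to 0 & \hbox{at rate} & 1,
\end{array}
\end{equation*}
so that the trees~(type~1) form a pure long-range contact process with rate~$\beta_1$ that freely overtakes grass~(type~2). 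The key point is that, under~$\gamma \leq \beta_1$, the rate at which grass is overtaken by trees in~$\zeta$ pointwise dominates the allelopathic extra death rate of species~2 in~$\xi$, so the survival of grass in~$\zeta$ will imply the survival of species~2 in~$\xi$.

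To construct the coupling I use a common graphical representation: type-1 arrows~$y \to x$ at rate~$\beta_1/\card(N_x)$, type-2 arrows at rate~$\beta_2/\card(N_x)$, and death marks at rate~$1$ per site, where each type-1 arrow is independently labeled ``poisonous'' with probability~$\gamma/\beta_1 \in [0,1]$. In~$\zeta$, every type-1 arrow with~$\zeta(y)=1$ sets~$\zeta(x) \leftarrow 1$ regardless of the prior value of~$\zeta(x)$. In~$\xi$, every type-1 arrow acts as a standard birth arrow on empty targets, and each poisonous arrow additionally acts as an allelopathy arrow, killing a~2 at the target when the source is a~1. Type-2 arrows and death marks act identically in the two processes. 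A case analysis over each event type then shows that the inclusions
\[
\{x : \xi_t(x) = 1\} \subseteq \{x : \zeta_t(x) = 1\} \quad \hbox{and} \quad \{x : \zeta_t(x) = 2\} \subseteq \{x : \xi_t(x) = 2\}
\]
are preserved for all~$t \geq 0$: the delicate case is when a poisonous type-1 arrow kills a~2 in~$\xi$, but then, by the first inclusion, the source is also a~1 in~$\zeta$ and the same arrow overtakes the grass site in~$\zeta$, preserving the second inclusion.

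Finally, I would invoke the long-range survival theorem for the grass-bush-tree process from Durrett and Swindle~\cite{Durrett_Swindle_1991} and Durrett and Schinazi~\cite{Durrett_Schinazi_1993}. Since the trees in~$\zeta$ form a long-range contact process with supercritical rate~$\beta_1 > 1$, their density in large boxes converges as~$M \to \infty$ to the mean-field equilibrium~$\rho_1 = 1 - 1/\beta_1$. The mean-field equation for grass at this tree equilibrium,
\[
u_2' = \beta_2 u_2(1 - \rho_1 - u_2) - u_2 - \beta_1 \rho_1 u_2,
\]
has a positive linear coefficient at~$u_2 = 0$ if and only if~$\beta_2(1-\rho_1) > 1 + \beta_1 \rho_1$, which simplifies to~$\beta_2 > \beta_1^2$ as assumed. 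A block construction in the long-range regime then transfers this invasion criterion to the spatial process and yields~$M_0 < \infty$ such that grass survives in~$\zeta$ for~$M > M_0$, and hence species~2 survives in~$\xi$. The main obstacle will be producing this block construction in a form directly compatible with our explicit parameter region, which may require adapting the arguments of~\cite{Durrett_Swindle_1991,Durrett_Schinazi_1993} to our specific rates rather than citing them as a black box.
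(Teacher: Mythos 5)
Your proposal is correct and follows essentially the same route as the paper: couple the allelopathic model with the Durrett--Swindle grass-bush-tree system via a common graphical representation in which a rate-$\gamma/N$ portion of the type-1 arrows (your ``poisonous'' thinning is equivalent to the paper's splitting into independent clocks of rates $\gamma/N$ and $(\beta_1 - \gamma)/N$) simultaneously serves as a kill arrow for the 2s in $\xi$ and an overtake arrow for the 1s in $\zeta$, verify that the resulting pairwise ordering is preserved, and then invoke the coexistence result $\beta_2 > \beta_1^2 > 1$ with $M$ large from~\cite{Durrett_Swindle_1991}. The only difference is your closing hedge about adapting the block construction, which is unnecessary: the cited theorem applies as a black box exactly in the parameter region $\beta_2 > \beta_1^2 > 1$, which is how the paper uses it.
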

 The phase structure of the process obtained from~\eqref{eq:extinction}--\eqref{eq:GBT} is depicted in Figures~\ref{fig:PS1} and~\ref{fig:PS2}.
 The gray rectangle in the first picture and the gray triangle in the second picture represent the parameter region covered in our theorems.
 The phase structure does not show any coexistence phase because this is what our numerical simulations suggest.
 However, the absence of a phase of coexistence in the spatial model is still an open problem. \\
\indent
 The rest of this paper is devoted to the proofs and organized as follows.
 Section~\ref{sec:mean-field} gives a complete description of the phase structure of the mean-field model.
 In preparation for the analysis of the spatial model, Section~\ref{sec:GR} explains how to construct the process from a graphical representation and deduce the monotonicity with respect to each of the parameters.
 The proofs of Theorems~\ref{th:MCP-block},~\ref{th:CP},~\ref{th:strong-2}~and~\ref{th:GBT} can be found in Sections~\ref{sec:MCP-CP}--\ref{sec:GBT} and rely on coupling arguments and/or block constructions.
 Finally, the proof of Theorem~\ref{th:MCP-dual}, which is more involved, will be carried out in Section~\ref{sec:MCP-dual}.


\section{Mean-field analysis}
\label{sec:mean-field}
 This section is devoted to the analysis of the mean-field model~\eqref{eq:mean-field}.
 In particular, we give a complete description of the phase structure with the limiting density if each type based on the parameters of the system.
 The key parameter regions that appear in our analysis are
 $$ \begin{array}{rcl}
      B_0 & \n = \n & \{(\beta_1, \beta_2, \gamma) \in \R_+^3: \beta_1 < 1 \ \hbox{and} \ \beta_2 < 1 \} \vspace*{4pt} \\
      B_1 & \n = \n & \{(\beta_1, \beta_2, \gamma) \in \R_+^3 : \beta_1 > 1 \ \hbox{and} \ \beta_2 < (1 + \gamma) \beta_1 - \gamma \} \vspace*{4pt} \\
      B_2 & \n = \n & \{(\beta_1, \beta_2, \gamma) \in \R_+^3 : \beta_2 > 1 \ \hbox{and} \ \beta_2 > \beta_1 \}. \end{array} $$
 Setting the right-hand side of~\eqref{eq:mean-field} equal to zero gives the three trivial fixed points
 $$ p_0 = (0, 0), \quad p_1 = \bigg(1 - \frac{1}{\beta_1}, 0 \bigg), \quad p_2 = \bigg(0, 1 - \frac{1}{\beta_2} \bigg) $$
 and the nontrivial fixed point~$p_{12} = (\bar u_1, \bar u_2)$ where
\begin{equation}
\label{eq:mean-field-1}
\bar u_1 = \frac{1}{\gamma} \bigg(\frac{\beta_2}{\beta_1} - 1 \bigg) \quad \hbox{and} \quad
\bar u_2 = 1 - \frac{1}{\beta_1} - \bar u_1 = 1 - \frac{1}{\beta_1} - \frac{1}{\gamma} \bigg(\frac{\beta_2}{\beta_1} - 1 \bigg).
\end{equation}
 The first step is to identify the sets of parameters for which each of the four fixed points belongs to the biologically relevant two-dimensional simplex~$\Delta^2$.
 The fixed point~$p_0$ is always in the two-dimensional simplex, the fixed point~$p_1$ belongs to the simplex if and only if~$\beta_1 > 1$, and the fixed point~$p_2$ belongs to the simplex if and only if~$\beta_2 > 1$.
 In addition,
\begin{equation}
\label{eq:mean-field-2}
\begin{array}{rclcl}
  p_{12} \in \Delta^2 & \Longleftrightarrow & 0 < \bar u_1, \bar u_2 < 1 \vspace*{4pt} \\
                      & \Longleftrightarrow & \beta_1 < \beta_2 < (1 + \gamma) \beta_1 - \gamma & \Longleftrightarrow & (\beta_1, \beta_2, \gamma) \in B_1 \cap B_2. \end{array}
\end{equation}
 To study the local stability, note that the Jacobian matrix is given by
 $$ J (u_1, u_2) = \left(\begin{array}{cc} \beta_1 (1 - 2u_1 - u_2) - 1 & - \beta_1 u_1 \vspace*{2pt} \\ - (\beta_2 + \gamma) u_2 & \beta_2 (1 - u_1 - 2u_2) - 1 - \gamma u_1 \end{array} \right). $$
 In particular, the Jacobian matrices at~$p_0$, $p_1$ and~$p_2$ are triangular, of the form
 $$ J (p_0) = \left(\begin{array}{cc} \beta_1 - 1 & 0 \vspace*{2pt} \\ 0 & \beta_2 - 1 \end{array} \right) \quad
    J (p_1) = \left(\begin{array}{cc} 1 - \beta_1 & \times \vspace*{2pt} \\ 0 & - \gamma \bar u_2 \end{array} \right) \quad
    J (p_2) = \left(\begin{array}{cc} \beta_1 / \beta_2 - 1 & 0 \vspace*{2pt} \\ \times & 1 - \beta_2 \end{array} \right). $$
 Using that~$\bar u_2 > 0$ if and only if~$\beta_2 < (1 + \gamma) \beta_1 - \gamma$ and looking at the sign of the two eigenvalues on the diagonal of each of the Jacobian matrices, we deduce that
 $$ \begin{array}{rcl}
      p_0 \ \hbox{is locally stable} & \Longleftrightarrow & \beta_1 < 1 \ \hbox{and} \ \beta_2 < 1, \ \hbox{i.e.}, \ (\beta_1, \beta_2, \gamma) \in B_0 \vspace*{4pt} \\
      p_1 \ \hbox{is locally stable} & \Longleftrightarrow & \beta_1 > 1 \ \hbox{and} \ \beta_2 < (1 + \gamma) \beta_1 - \gamma, \ \hbox{i.e.}, \ (\beta_1, \beta_2, \gamma) \in B_1 \vspace*{4pt} \\
      p_2 \ \hbox{is locally stable} & \Longleftrightarrow & \beta_2 > 1 \ \hbox{and} \ \beta_2 > \beta_1, \ \hbox{i.e.}, \ (\beta_1, \beta_2, \gamma) \in B_2. \end{array} $$
 Using numerical simulations, Durrett and Levin noticed that~$p_{12}$ is a saddle, which we now prove rigorously.
 First, notice that, because~$1 - \bar u_1 - \bar u_2 = 1 / \beta_1$,
 $$ \beta_1 (1 - 2 \bar u_1 - \bar u_2) - 1 = \beta_1 (1 - \bar u_1 - \bar u_2) - \beta_1 \bar u_1 - 1 = - \beta_1 \bar u_1 $$
 showing that the two coefficients on the first row of the Jacobian matrix at the fixed point~$p_{12}$ are in fact equal.
 It follows that the expression of the determinant reduces to
 $$ \begin{array}{rcl}
    \de \,(J (p_{12})) & \n = \n & - \beta_1 \bar u_1 (\beta_2 (1 - \bar u_1 - 2 \bar u_2) - 1 - \gamma \bar u_1 + (\beta_2 + \gamma) \bar u_2) \vspace*{4pt} \\
                       & \n = \n & - \beta_1 \bar u_1 (\beta_2 (1 - \bar u_1 - \bar u_2) - 1 - \gamma (\bar u_1 - \bar u_2) \vspace*{4pt} \\
                       & \n = \n & - \beta_1 \bar u_1 (\beta_2 / \beta_1 - \gamma (\bar u_1 - \bar u_2) - 1). \end{array} $$
 Then, replacing~$\bar u_1$ and~$\bar u_2$ by their values in~\eqref{eq:mean-field-1}, we get
 $$ \de \,(J (p_{12})) = - \bar u_1 ((1 + \gamma) \beta_1 - \gamma - \beta_2) < 0 $$
 whenever the equivalent conditions in~\eqref{eq:mean-field-2} hold.
 This shows that the two eigenvalues have opposite signs therefore the interior fixed point is always a saddle.
\begin{figure}[t!]
\centering
\scalebox{0.35}{\input{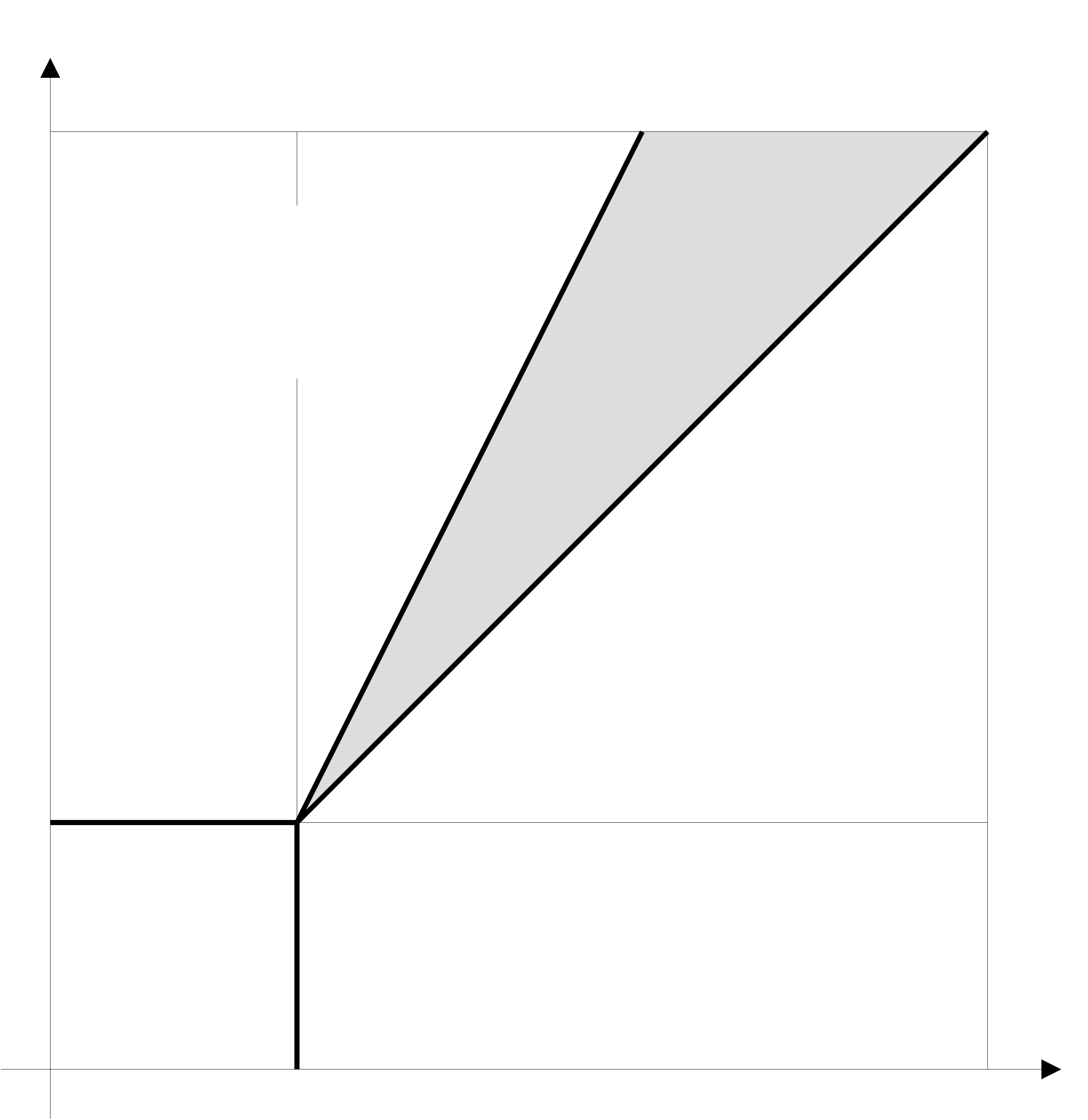_t}}
\caption{\upshape{
 Phase structure of the mean-field model~\eqref{eq:mean-field}.}}
\label{fig:MFPS}
\end{figure}
 To deduce the limiting behavior of the system from the analysis of the local stability of the fixed points, the last step is to apply the~Bendixson--Dulac theorem to exclude the existence of periodic orbits.
 More precisely, the objective is to find a smooth function~$\phi (u_1, u_2)$ such that the sign of~$\nabla \cdot (\phi F_1, \phi F_2)$ is constant almost everywhere in the simplex.
 Using the function~$\phi = 1 / (u_1 u_2)$, we get
 $$ \begin{array}{l}
    \nabla \cdot (\phi F_1, \phi F_2) =
    \displaystyle \frac{\partial (\phi F_1)}{\partial u_1} + \frac{\partial (\phi F_2)}{\partial u_2} \vspace*{4pt} \\ \hspace*{20pt} =
    \displaystyle \frac{\partial}{\partial u_1} \bigg(\frac{\beta_1 (1 - u_1 - u_2) - 1}{u_2} \bigg) +
    \displaystyle \frac{\partial}{\partial u_2} \bigg(\frac{\beta_2 (1 - u_1 - u_2) - 1 - \gamma u_1}{u_1} \bigg) = - \frac{\beta_1}{u_2} - \frac{\beta_2}{u_1} < 0
    \end{array} $$
 for all~$\beta_1, \beta_2 > 0$ and all~$u$ in the interior of the simplex.
 This proves the absence of periodic orbits which, together with the local stability of the fixed points, give the following limiting behavior.
 In the parameter region~$B_0$, the population goes extinct in the sense that~$u \to p_0$.
 In the parameter region~$B_1 \setminus B_2$, the inhibitory species wins in the sense that~$u \to p_1$ when starting with a positive density of~1s.
 Similarly, in the parameter region~$B_2 \setminus B_1$, the susceptible species wins.
 Finally, in the parameter region~$B_1 \cap B_2$, the system is bistable:
 for almost all initial conditions in the simplex, the densities converge to either~$p_1$ or~$p_2$, indicating that the outcome of the competition depends on the initial densities.
 Figure~\ref{fig:MFPS} shows a picture of the phase structure.


\section{Harris' graphical representation}
\label{sec:GR}
 The starting point to study the spatial model and prove our results is to use an idea of Harris~\cite{Harris_1972} to construct the process graphically from a collection of independent Poisson processes/exponential clocks.
 In the case of the allelopathic model, we use four collections corresponding to the following four updates:
 birth of a~1, birth of a~2, natural death, death of a~2 due to inhibitory effects.
 To construct the process, it is convenient to think of the lattice~$\Z^d$ as the vertex set of a directed graph in which there is a directed edge~$\vec{xy}$ if and only if vertices~$x$ and~$y$ are neighbors, and we let~$N$ be the common size of the interaction neighborhoods.
 The exponential clocks are attached to the vertices and directed edges of the graph as follows.
\begin{itemize}
 \item {\bf Inhibition}.
       Place an exponential clock with rate~$\gamma / N$ along each directed edge~$\vec{xy}$.
       Each time the clock rings, say at time~$t$, draw an arrow~$(x, t) \to (y, t)$ labeled with a~0 to indicate that if the tail of the arrow is occupied by a~1 and the head of the arrow is occupied by a~2 then the head of the arrow becomes empty. \vspace*{4pt}
 \item {\bf Birth of a~1}.
       Place an exponential clock with rate~$\beta_1 / N$ along each directed edge~$\vec{xy}$.
       Each time the clock rings, say at time~$t$, draw an arrow~$(x, t) \to (y, t)$ labeled with a~1 to indicate that if the tail of the arrow is occupied by a~1 and the head of the arrow is empty then the head of the arrow becomes occupied by a~1. \vspace*{4pt}
 \item {\bf Birth of a~2}.
       Place an exponential clock with rate~$\beta_2 / N$ along each directed edge~$\vec{xy}$.
       Each time the clock rings, say at time~$t$, draw an arrow~$(x, t) \to (y, t)$ labeled with a~2 to indicate that if the tail of the arrow is occupied by a~2 and the head of the arrow is empty then the head of the arrow becomes occupied by a~2. \vspace*{4pt}
 \item {\bf Natural death}.
       Place an exponential clock with rate one at each vertex~$x$.
       Each time the clock rings, say at time~$t$, put a cross at~$(x, t)$ to indicate that site~$x$ becomes empty.
\end{itemize}
 Figure~\ref{fig:GR} shows an example of realization of this graphical representation and how to construct the process starting from a given initial configuration.
\begin{figure}[t!]
\centering
\scalebox{0.35}{\input{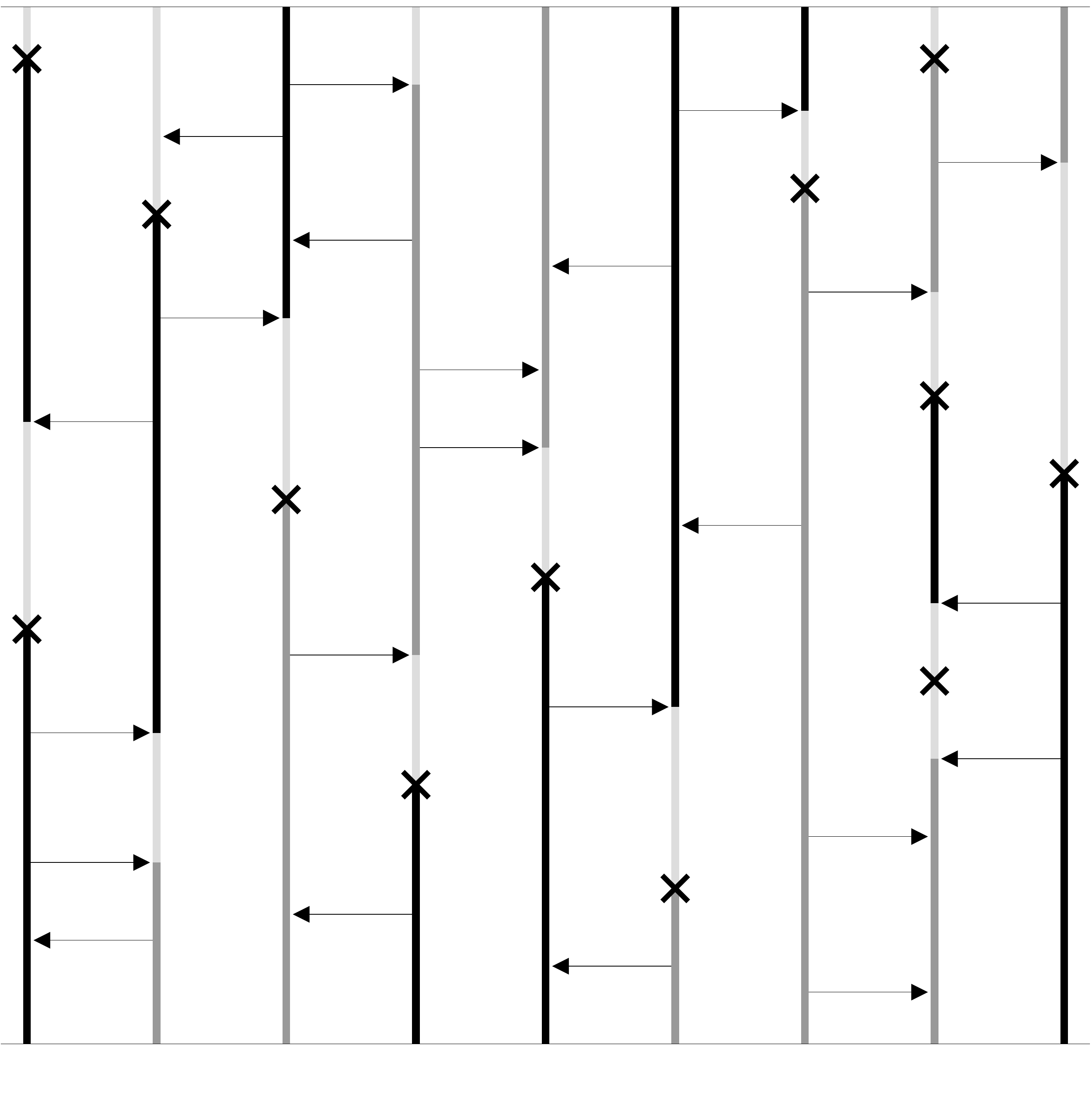_t}}
\caption{\upshape{
 Graphical representation of the allelopathic model.
 The black lines represent the space-time region occupied by the inhibitory species and the gray lines the region occupied by the susceptible species.}}
\label{fig:GR}
\end{figure}
 An argument of Harris~\cite{Harris_1972} based on techniques from percolation theory shows more generally that the allelopathic model on the infinite integer lattice starting from any initial configuration is well-defined and can be constructed using the collection of independent exponential clocks above. \\
\indent
 The graphical representation can also be used together with the superposition properties of the exponential distribution to couple processes with different parameters or different dynamical structures, and deduce some monotonicity results.
 For instance, fix~$\gamma_1 \leq \gamma_2$, and let~$\xi_t^i$ be the process with birth rates~$\beta_1$ and~$\beta_2$, and parameter~$\gamma_i$ for~$i = 1, 2$.
 Think of the first process as being generated from the graphical representation described above with~$\gamma = \gamma_1$ while, according to the superposition property, the second process can be constructed from the same graphical representation as the first process by also adding type~0 arrows along the directed edges at rate~$(\gamma_2 - \gamma_1) / N$, which results in a joint construction~(coupling) of the two processes.
 In addition, starting both processes from the same initial configuration, one can easily check that
 $$ \begin{array}{rclcrcl} \xi_t^1 (x) = 1 & \Longrightarrow & \xi_t^2 (x) = 1 & \quad \hbox{and} \quad & \xi_t^2 (x) = 2 & \Longrightarrow & \xi_t^1 (x) = 2. \end{array} $$
 Using also the monotonicity of the expectation, we deduce that, the birth rates being fixed, survival of the inhibitory species when~$\gamma = \gamma_1$ implies survival of the inhibitory species when~$\gamma = \gamma_2$, and survival of the susceptible species when~$\gamma = \gamma_2$ implies survival of the susceptible species when~$\gamma = \gamma_1$.
 The monotonicity with respect to each of the birth rates mentioned in the introduction can be proved similarly by coupling the birth arrows. \\
\indent
 Recalling that the multitype contact process corresponds to the special case~$\gamma = 0$, the previous monotonicity result also implies that if the~1s win in the multitype contact process then they also win in the allelopathic model with the same birth parameters for all~$\gamma \geq 0$.
 This, together with~Neuhauser's result~\cite[Theorem~1]{Neuhauser_1992} which states that
 $$ \begin{array}{rcl} \beta_1 > \beta_2 > \beta_c & \Longrightarrow & \hbox{species 1 wins} \end{array} $$
 for the multitype contact process, implies that the same result~\eqref{eq:MCP-coupling} also holds for the allelopathic model.
 The graphical representation will also be used in the next sections to couple the process with the grass-bush-tree system~\cite{Durrett_Schinazi_1993,Durrett_Swindle_1991}, define good events in certain space-time regions when using block constructions, and study the dual process of the model.


\section{Proofs of Theorems~\ref{th:MCP-block} and~\ref{th:CP}}
\label{sec:MCP-CP}
 This section is devoted to the proofs of Theorem~\ref{th:MCP-block} and~\ref{th:CP}.
 Assuming that the susceptible species is a better competitor than the inhibitory species, recall that the theorems state that the susceptible species wins if the inhibitory effects are sufficiently weak whereas the inhibitory species survives if the inhibitory effects are sufficiently strong.
 The proofs of both theorems consist in comparing the process in the two extreme cases~$\gamma = 0$ and~$\gamma = \infty$ with well-known interacting particle systems, and then using a block construction along with a perturbation argument to prove the existence of a phase transition at a nondegenerate critical value~$\gamma_c$.
 Throughout this section, $P_{\gamma}$ will refer to the probability for the process with parameter~$\gamma$. \\
\indent
 The technique now known as the block construction appeared for the first time in the work of Bramson and Durrett~\cite{Bramson_Durrett_1988} and is reviewed in detail in Durrett's lecture notes~\cite{Durrett_1995}.
 The general idea of this approach is to couple the process properly rescaled in space and time with oriented site percolation, which we now briefly describe.
 To begin with, let
 $$ \Lat = \{(m, n) \in \Z^d \times \Z_+ : m_1 + \cdots + m_d + n \ \hbox{is even} \} $$
 which we turn into a directed graph by placing arrows
 $$ (m, n) \to (m', n') \quad \hbox{if and only if} \quad |m_1 - m_1'| + \cdots + |m_d - m_d'| = 1 \ \ \hbox{and} \ \ n' = n + 1. $$
 The sites in~$\Lat$ are assumed to be open or closed with probability~$p$ and~$1 - p$, respectively, and whether a finite collection of sites are open or closed are assumed to be independent events when these sites are sufficiently far apart.
 We say that a site is wet if it can be reached from a directed path of open sites starting at level~$n = 0$, we let~$\C_0$ be the cluster of sites that can be reached from a directed path of open sites starting at the origin, and we call the event that this cluster is infinite the percolation event.
 Using a so-called contour argument, it can be proved that there exists a nondegenerate critical value~$p_c \in (0, 1)$ for the density of open sites such that
 $$ \begin{array}{rcl} p > p_c & \Longrightarrow & P_p (|\C_0| = \infty) > 0, \end{array} $$
 and we refer to Durrett~\cite[Section~10]{Durrett_1984} for the details of the proof.
 A consequence of this result is that, in the supercritical phase~$p > p_c$ and starting with infinitely many open sites at level~$n = 0$, the density of wet sites at level~$n$ converges to a positive limit as~$n \to \infty$. \\
\indent
 To prove Theorem~\ref{th:MCP-block}, the basic idea is to observe that, under the assumptions of the theorem, the allelopathic model consists of a small perturbation of a multitype contact process in which type~2 individuals win.
 To turn our intuition into a rigorous proof, we will apply a perturbation argument to the block construction Durrett and Neuhauser~\cite[Section~3]{Durrett_Neuhauser_1997} used to study the multitype contact process, the particular case~$\gamma = 0$.
 Changing a little bit the notation in their construction, for each site~$(m, n) \in \Lat$, we define the space-time boxes
 $$ \begin{array}{rcl}
      A_{m, n} & \n = \n & (mL, nT) + ([- L, L]^d \times \{T \}) \vspace*{4pt} \\
      B_{m, n} & \n = \n & (mL, nT) + ([- L, L]^d \times [T, 2T]) \vspace*{4pt} \\
      C_{m, n} & \n = \n & (mL, nT) + ([- 3L, 3L]^d \times [0, 2T]) \quad \hbox{where} \quad T = L^2. \end{array} $$
 Note that~$A_{m, n}$ is flat and represents the bottom of the space-time box~$B_{m, n}$.
 We partition~$A_{m, n}$ into small cubes of size~$L^{0.1} \times \cdots \times L^{0.1}$, and define the events
 $$ \begin{array}{rcl}
     E_{m, n} & \n = \n & \hbox{each of the small cubes in~$A_{m, n}$ contains at least one type~2 individual} \vspace*{4pt} \\
     F_{m, n} & \n = \n & \hbox{the space-time box~$B_{m, n}$ does not contain any type~1 individual}. \end{array} $$
 Durrett and Neuhauser's proof, which relies on duality techniques along with a repositioning algorithm, implies that, for the multitype contact process with~$\beta_2 > \beta_1 > \beta_c$ and for all~$\ep > 0$, there exists a collection of good events~$G_{m, n}$ that only depend on the graphical representation in the larger space-time box~$C_{m, n}$ such that, for all~$L$ sufficiently large,
 $$ \begin{array}{rl}
     \hbox{(a)} & P_0 (G_{m, n}) \geq 1 - \ep / 2 \vspace*{4pt} \\
     \hbox{(b)} & E_{m, n} \cap G_{m, n} \ \Longrightarrow \ E_{m', n'} \cap F_{m', n'} \ \ \hbox{for all} \ \ (m', n') \leftarrow (m, n). \end{array} $$
 In words, if the bottom of a space-time box contains many~2s then, with high probability, not only the bottom of the boxes immediately above contains many~2s but also these boxes do not contain any~1s, as shown in Figure~\ref{fig:MCP-block} in the one-dimensional case.
\begin{figure}[t!]
\centering
\scalebox{0.35}{\input{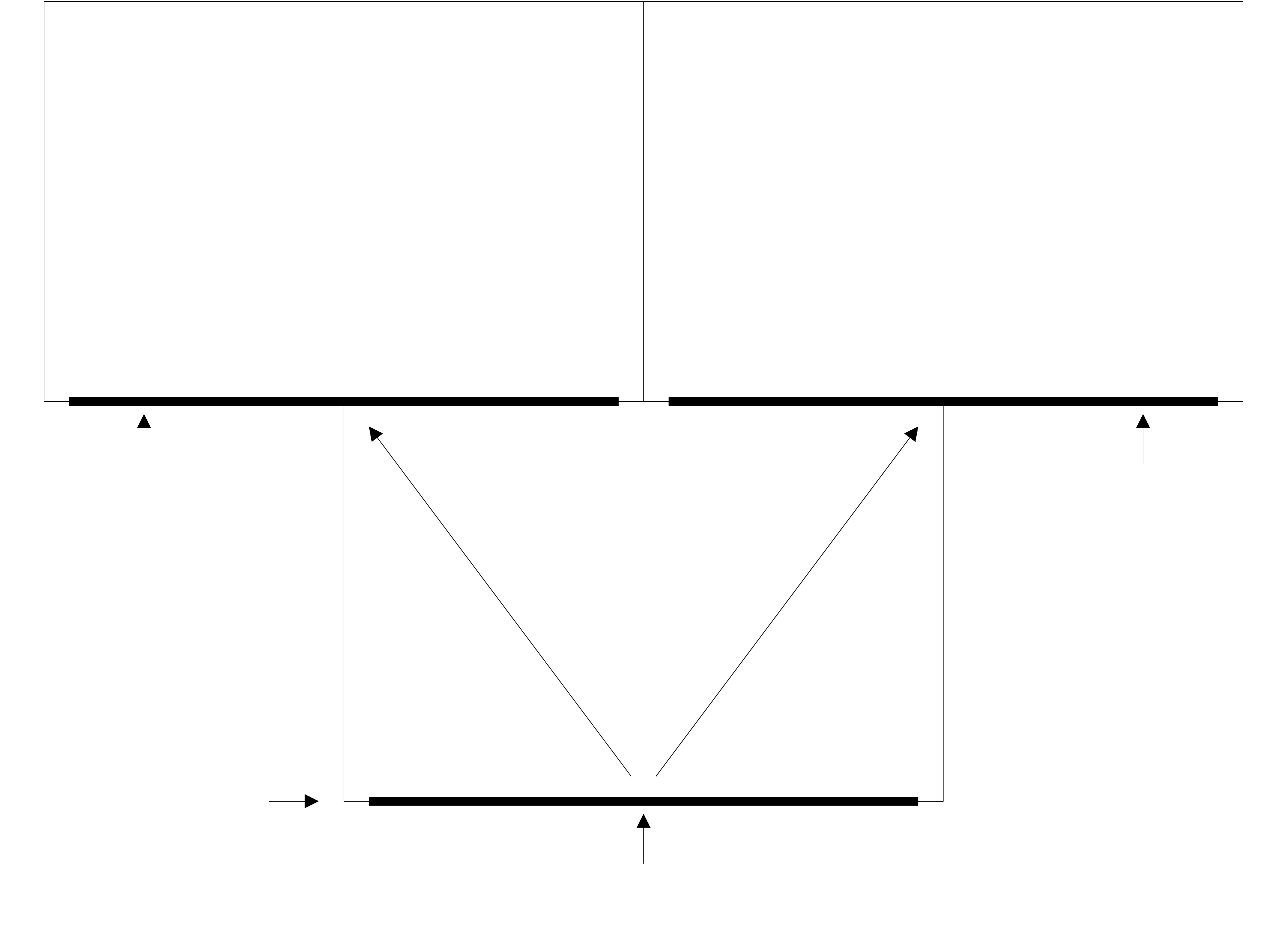_t}}
\caption{\upshape{
 Illustration of the block construction in Durrett and Neuhauser~\cite{Durrett_Neuhauser_1997}.}}
\label{fig:MCP-block}
\end{figure}
 In order to apply a perturbation argument, we now think of the allelopathic model as being generated from the same graphical representation as the multitype contact process along with type~0 arrows starting from each site at rate~$\gamma$, as shown in Figure~\ref{fig:GR}.
 The scale parameter~$L$ being fixed, there exists~$\gamma_- > 0$ small such that the probability that the graphical representation of the allelopathic model coincides with that of the multitype contact process in the box~$C_{m, n}$ is given by
 $$ \begin{array}{l}
      P_{\gamma} (\hbox{no type~0 arrows in the space-time box} \ C_{m, n}) \vspace*{4pt} \\ \hspace*{40pt} =
      P (\poisson (\gamma \vol (C_{m, n})) = 0) = \exp (- 2L^2 (6L + 1)^d \,\gamma) \geq 1 - \ep / 2 \end{array} $$
 for all~$\gamma < \gamma_-$.
 In particular, for all~$\gamma < \gamma_-$,
 $$ P_{\gamma} (G_{m, n}) = 1 - P_{\gamma} (G_{m, n}^c) \geq 1 - P_0 (G_{m, n}^c) - \ep / 2 \geq 1 - \ep / 2 - \ep / 2 = 1 - \ep. $$
 Calling site~$(m, n) \in \Lat$ a good site whenever~$E_{m, n} \cap F_{m, n}$ occurs,~\cite[Theorem~A.4]{Durrett_1995} implies that there is a coupling of the process and oriented site percolation with parameter~$p = 1 - \ep$ such that the set of good sites in the interacting particle system dominates the set of wet sites in the percolation model.
 Choosing~$\ep < 1 - p_c$ then implies that the density of good sites converges to a positive limit as~$n \to \infty$, which proves survival of the susceptible species.
 This does not fully prove the theorem because there is still a positive density~$\ep > 0$ of closed sites, and the corresponding space-time box~$B_{m, n}$ can potentially contain~1s.
 To prove extinction of the inhibitory species, the last step is to use a result due to Durrett~\cite{Durrett_1992} which shows that, when~$\ep$ is small enough, not only the open sites percolate but also the closed sites do not percolate.
 Because the~1s cannot appear spontaneously, the presence of an individual of type~1 in box~$B_{m, n}$ implies the existence of a directed path of closed sites from level zero to site~$(m, n)$, an event whose probability decays exponentially fast with~$n$ due to the lack of percolation of the closed sites.
 This shows extinction of the inhibitory species and completes the proof of Theorem~\ref{th:MCP-block}. \\
\indent
 To prove Theorem~\ref{th:CP}, we observe that, under the assumptions of the theorem, the set of~1s in the allelopathic model consists of a small perturbation of a supercritical contact process.
 The first work showing that the supercritical contact process properly rescaled in space and time dominates oriented site percolation with parameter arbitrarily close to one is the paper of Bezuidenhout and Grimmett~\cite{Bezuidenhout_Grimmett_1990} where they used this coupling together with a perturbation argument to prove extinction of the critical contact process.
 Instead of explaining their construction, we simply reuse the result of Durrett and Neuhauser~\cite{Durrett_Neuhauser_1997} exchanging the roles of the two types of particles, in the absence of~2s.
 Using the same space-time boxes but redefining the events
 $$ \begin{array}{rcl} E_{m, n} & \n = \n & \hbox{each of the small cubes in~$A_{m, n}$ contains at least one type~1 individual}, \end{array} $$
 their result implies that, for the~(supercritical) contact process with~$\beta_1 > \beta_c$ and for all~$\ep > 0$, there exists a collection of good events~$G_{m, n}$ that only depend on the graphical representation in the larger space-time box~$C_{m, n}$ such that, for all~$L$ sufficiently large,
\begin{equation}
\label{eq:CP-1}
\begin{array}{rl}
\hbox{(a)} & P_{\infty} (G_{m, n}) \geq 1 - \ep / 2 \vspace*{4pt} \\
\hbox{(b)} & E_{m, n} \cap G_{m, n} \ \Longrightarrow \ E_{m', n'} \ \ \hbox{for all} \ \ (m', n') \leftarrow (m, n). \end{array}
\end{equation}
 In words, if the bottom of a space-time box~$B_{m, n}$ contains many~1s then, with high probability, the bottom of the boxes immediately above also contains many~1s.
 Returning to the allelopathic model, observe that, in the limiting case~$\gamma = \infty$, the~2s in the neighborhood of a~1 are instantaneously killed, which implies that the~1s only attempt to give birth onto empty sites or sites already occupied by a type~1 particle.
 This shows that the~1s do not feel the presence of the~2s and therefore evolve according to a supercritical contact process.
 The next step is to apply a perturbation argument to prove that the good events~$G_{m, n}$ above still occur with probability arbitrarily close to one when~$\gamma$ is large but finite, which is done in the following lemma.
\begin{lemma}
\label{lem:CP}
 For all~$\ep > 0$, there exist good events~$\bar G_{m, n}$ and large~$L$ and~$\gamma_+$ such that
 $$ \begin{array}{rl}
    \hbox{(a)} & P_{\gamma} (\bar G_{m, n}) \geq 1 - \ep \ \hbox{for all} \ \ \gamma > \gamma_+ \vspace*{4pt} \\
    \hbox{(b)} & E_{m, n} \cap \bar G_{m, n} \ \Longrightarrow \ E_{m', n'} \ \ \hbox{for all} \ \ (m', n') \leftarrow (m, n). \end{array} $$
\end{lemma}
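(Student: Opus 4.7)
The lemma is proved by a perturbation argument around the $\gamma = \infty$ block construction recalled in~\eqref{eq:CP-1}. The strategy is to couple the finite-$\gamma$ allelopathic model~$\xi^{\gamma}_t$ with the $\gamma = \infty$ process~$\tilde\xi_t$ on the common graphical representation of Section~\ref{sec:GR}: take the four families of clocks with parameter~$\gamma$, and obtain~$\tilde\xi_t$ from the same clocks by adding the extra rule that any~$2$ adjacent to a~$1$ is killed instantaneously. By the observation preceding the lemma, the set of~$1$s in~$\tilde\xi_t$ evolves as a contact process with birth rate~$\beta_1$ built only from the birth-$1$ arrows and the death crosses, so the good events provided by~\eqref{eq:CP-1} apply to~$\tilde\xi_t$ verbatim. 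Starting both processes from the same configuration at time~$nT$, define $\bar G_{m, n} = G_{m, n} \cap H_{m, n}$, where $H_{m, n}$ is the event that the sets of~$1$s in~$\xi^{\gamma}_t$ and~$\tilde\xi_t$ coincide throughout the larger space-time box~$C_{m, n}$.

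The key step is the estimate $P_{\gamma}(H_{m, n}^c) \leq C(L) / \gamma$ for some constant~$C(L)$ depending only on~$L, \beta_1, \beta_2$. Because the two processes share every Poisson clock, their sets of~$1$s can only diverge through a \emph{blocking event}: a birth-$1$ arrow $(x, t) \to (y, t)$ in~$C_{m, n}$ rings with~$x$ occupied by a~$1$ in both processes while~$y$ is still occupied by a~$2$ in~$\xi^{\gamma}_t$; in~$\tilde\xi_t$ the~$2$ at~$y$ would already have been removed, so a~$1$ is born at~$y$ in~$\tilde\xi_t$ but not in~$\xi^{\gamma}_t$. Using a stopping-time argument at the first blocking event, the sets of~$1$s agree up to that time, so one can bound the probability of a blocking event as follows. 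Whenever~$y$ is a~$2$ adjacent to a~$1$, the $0$-arrow clock on the corresponding edge rings at rate~$\gamma / N$ while the competing birth-$1$ clock rings at rate~$\beta_1 / N$, so the probability that the birth-$1$ arrow wins this race is at most~$\beta_1 / (\beta_1 + \gamma)$. Summing over the candidate birth-$1$ arrows in~$C_{m, n}$, whose expected number is bounded by a constant times~$\vol (C_{m, n}) = 2 L^2 (6L + 1)^d$ uniformly in~$\gamma$, yields the claimed bound.

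To conclude, first choose~$L$ large so that $P(G_{m, n}^c) < \ep / 2$, as allowed by~\eqref{eq:CP-1}(a); then choose~$\gamma_+$ large so that $C(L) / \gamma_+ < \ep / 2$. For every~$\gamma > \gamma_+$, $P_{\gamma}(\bar G_{m, n}) \geq 1 - \ep$, which is~(a). For~(b), on~$H_{m, n}$ the set of~$1$s in~$\xi^{\gamma}_t$ restricted to~$C_{m, n}$ coincides with that of~$\tilde\xi_t$; thus~$E_{m, n}$ for~$\xi^{\gamma}_t$ is equivalent to~$E_{m, n}$ for~$\tilde\xi_t$, and~\eqref{eq:CP-1}(b) applied to~$\tilde\xi_t$ delivers~$E_{m', n'}$ for~$\tilde\xi_t$ and hence for~$\xi^{\gamma}_t$. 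The main technical obstacle is formalizing the stopping-time estimate so that the two coupled configurations can be treated as identical at the instant each candidate blocking is tested; this is standard but has to be arranged carefully because the candidate events are themselves determined by the dynamics being compared.
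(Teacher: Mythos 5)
Your proposal is correct and follows essentially the same route as the paper: a perturbation of the $\gamma = \infty$ block construction~\eqref{eq:CP-1}, with the error controlled by a race between kill arrows at rate~$\gamma/N$ and birth-$1$ arrows at rate~$\beta_1/N$, a union bound over a number of candidates of order~$\vol (C_{m, n})$, and the order of quantifiers you state ($L$ fixed first, then~$\gamma_+$). The one substantive difference is precisely the obstacle you flag at the end. Your event~$H_{m, n}$ (agreement of the two coupled processes throughout~$C_{m, n}$) is determined by the dynamics, hence by the configuration at time~$nT$ as well as by the clocks; this not only forces the stopping-time argument you describe but also threatens the requirement, needed for the comparison with oriented site percolation, that~$\bar G_{m, n}$ be measurable with respect to the graphical representation in~$C_{m, n}$ alone. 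The paper sidesteps both issues by replacing the agreement event with a stronger, purely graphical sufficient condition~$F_{m, n}$: for every type~$2$ arrow pointing at a point~$(x, t) \in C_{m, n}$, kill arrows from all~$N$ sites of~$N_x$ point at~$x$ before any type~$1$ arrow does. On this event the~$1$s provably evolve as if the~$2$s were absent, no conditioning on the evolving configuration is needed, and the probability of the complement is bounded by the same race estimate you compute, namely~$2 N h_L \beta_1 / (\beta_1 + \gamma / N)$, after first conditioning on the number of type~$2$ arrows in the box being at most~$2 h_L$. To make your version airtight, replace agreement of the coupled processes by an arrow-only sufficient condition of this kind rather than formalizing the stopping-time comparison.
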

\begin{proof}
 Think of the allelopathic model as being generated from the same graphical representation as the contact process along with type~0 arrows starting from each site at rate~$\gamma$ and type~2 arrows starting from each site at rate~$\beta_2$, as shown in Figure~\ref{fig:GR}.
 Then, the set of~1s in the space-time box~$C_{m, n}$ evolves as if there were no~2s if each time a~2 gives birth onto a site~$x$ next to a~1, the offspring dies before any of the surrounding~1s tries to send an offspring to~$x$.
 This happens in particular if each time there is a type~2 arrow pointing at~$(x, t) \in C_{m, n}$, there are~$N$ type~0 arrows starting from each site in~$N_x$ and pointing at~$x$ before any type~1 arrow pointing at~$x$.
 Calling this event~$F_{m, n}$, part~(b) of the lemma holds for the events
 $$ \bar G_{m, n} = F_{m, n} \cap G_{m, n} \quad \hbox{for all} \quad (m, n) \in \Lat. $$
 To estimate the probability of~$\bar G_{m, n}$, define the events
 $$ H_{m, n} = \hbox{there are less than~$2h_L = 4 \beta_2 L^2 (6L + 1)^d$ type~2 arrows pointing at~$C_{m, n}$}. $$
 Because type~2 arrows occur at rate~$\beta_2$ at each site,
\begin{equation}
\label{eq:CP-2}
  P (H_{m, n}^c) = P (\poisson (\beta_2 \vol (C_{m, n})) \geq 2h_L) = P (\poisson (h_L) \geq 2h_L)) \leq \ep / 4
\end{equation}
 for all~$L$ sufficiently large.
 Now, let
 $$ X_1, X_2, \ldots, X_N = \exponential (\gamma / N) \quad \hbox{and} \quad Y = \exponential (\beta_1) $$
 be independent.
 The parameter~$L$ being fixed so that~\eqref{eq:CP-1} and~\eqref{eq:CP-2} hold, recalling the rate of the type~0 arrows and type~1 arrows, and using the superposition property, we get
 $$ \begin{array}{rcl}
      P_{\gamma} (F_{m, n}^c \,| \,H_{m, n}) & \n \leq \n & 2h_L P_{\gamma} (\max (X_1, X_2, \ldots, X_N) > Y) \vspace*{4pt} \\
                                             & \n \leq \n & 2N h_L P_{\gamma} (X_1 > Y) = 2N h_L \beta_1 / (\beta_1 + \gamma / N) \leq \ep / 4 \end{array} $$
 for all~$\gamma$ sufficiently large.
 In particular, there exists~$\gamma_+ < \infty$ such that
\begin{equation}
\label{eq:CP-3}
\begin{array}{rcl}
  P_{\gamma} (F_{m, n}^c) & \n = \n & P_{\gamma} (F_{m, n}^c \,| \,H_{m, n}) P (H_{m, n}) + P_{\gamma} (F_{m, n}^c \,| \,H_{m, n}^c) P (H_{m, n}^c) \vspace*{4pt} \\
                          & \n \leq \n & P_{\gamma} (F_{m, n}^c \,| \,H_{m, n}) + P (H_{m, n}^c) \leq \ep / 4 + \ep / 4 = \ep / 2 \end{array}
\end{equation}
 for all~$\gamma > \gamma_+$.
 Combining~\eqref{eq:CP-1} and~\eqref{eq:CP-3}, we conclude that
 $$ P_{\gamma} (\bar G_{m, n}) \geq 1 - P_{\gamma} (F_{m, n}^c) - P_{\infty} (G_{m, n}^c) \geq 1 - \ep / 2 - \ep / 2 = 1 - \ep $$
 for all~$\gamma > \gamma_+$, which proves part~(a) of the lemma.
\end{proof} \\ \\
 As previously, calling site~$(m, n) \in \Lat$ a good site when~$E_{m, n}$ occurs, it follows from the lemma that there is a coupling with oriented site percolation with parameter~$p = 1 - \ep$ such that the set of good sites dominates the set of wet sites in the percolation model.
 Choosing~$\ep$ small enough implies that the density of good sites converges to a positive limit as~$n \to \infty$, which shows survival of the inhibitory species.
 The stochastic domination also implies that there is a site~(in fact infinitely many sites)~$(m, 0)$ such that the cluster of good sites starting at~$(m, 0)$ expands linearly in all directions.
 Because invasion paths cannot jump over or go around each other in the presence of one-dimensional nearest neighbor interactions, we deduce that this cluster is void of~2s therefore species~1 wins when~$M = d = 1$.
 This completes the proof of Theorem~\ref{th:CP}.


\section{Proof of Theorem~\ref{th:strong-2}}
\label{sec:strong-2}
 This section focuses on the effects of the competitiveness of the susceptible species, measured by the birth rate~$\beta_2$.
 Recall from~\eqref{eq:MCP-coupling} that the inhibitory species wins whenever it is a better competitor than the susceptible species.
 In contrast, Theorem~\ref{th:strong-2} states that, even when the inhibitory effects are strong, the susceptible species wins if it is highly competitive.
 This and monotonicity imply the existence of a unique phase transition from extinction to survival of the susceptible species in the direction of the parameter~$\beta_2$ as shown in Figure~\ref{fig:PS2}.
 To prove the theorem, let
 $$ \Lambda_- = \{0, 1 \}^d \quad \hbox{and} \quad \Lambda_+ = \{-1, 0, 1, 2 \}^d $$
 and let~$\bar \xi_t$ be the allelopathic model starting from
 $$ \bar \xi_0 (x) = 2 \ \ \hbox{for all} \ \ x \in \Lambda_- \quad \hbox{and} \quad \bar \xi_0 (x) \neq 2 \ \ \hbox{for all} \ \ x \notin \Lambda_+ $$
 and modified so that births of~2s outside the larger cube~$\Lambda_+$ are suppressed.
 The next lemma shows that, in the limiting case~$\beta_2 = \infty$, the set of~2s fully invades~$\Lambda_+$ in a finite deterministic time with probability arbitrarily close to one.
\begin{lemma}
\label{lem:infinity}
 Let~$\beta_2 = \infty$.
 Then, for all~$\ep > 0$, there exists~$T < \infty$ such that
 $$ P_{\infty} (\bar \xi_T (x) = 2 \ \hbox{for all} \ x \in \Lambda_+) \geq 1 - \ep / 2. $$
\end{lemma}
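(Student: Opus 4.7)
The strategy is to exploit two structural consequences of the limit~$\beta_2 = \infty$: any empty site with at least one type-$2$ neighbor is instantaneously filled with a~$2$, and equivalently, any~$2$ that has a~$2$ neighbor is ``immortal'' in the sense that its total death rate~$1 + \gamma f_1$ is finite while the rate at which a~$2$ neighbor tries to refill the freshly emptied site is infinite, so any death is undone without delay. I would first make this instant-fill/immortality principle precise (for example, as the limit~$\beta_2 \to \infty$ using the graphical representation of Section~\ref{sec:GR}). Applied to~$\bar\xi_0$, this principle provides the seed of the invasion: since $M \geq 1$, every site of~$\Lambda_-$ has a nearest-neighbor partner in~$\Lambda_-$, and so~$\Lambda_-$ remains fully occupied by~$2$s for all~$t \geq 0$.

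It then remains to propagate the~$2$s through the finite set~$\Lambda_+ \setminus \Lambda_-$. For each~$y \in \Lambda_+$, let~$d(y)$ denote the graph distance from~$y$ to~$\Lambda_-$ along unit steps inside~$\Lambda_+$; since each coordinate of~$y$ differs from~$\{0, 1\}$ by at most~$1$, we have~$d(y) \leq d$. Let~$T_j$ be the first time at which every site at distance at most~$j$ is a~$2$, with~$T_0 = 0$. Given $T_j < \infty$, fix any~$y$ with $d(y) = j + 1$: at time~$T_j$, $y$ has a~$2$ neighbor at distance~$j$, which is immortal thereafter by the inductive hypothesis. If~$y$ is empty at~$T_j$ then~$y$ becomes a~$2$ at time~$T_j$; otherwise~$y$ contains a~$1$ whose natural death clock (rate~$1$, independent by the graphical construction) rings after an $\exp(1)$ wait, at which instant~$y$ becomes and remains a~$2$. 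A union bound over the at most~$|\Lambda_+| = 4^d$ sites at distance~$j + 1$ gives
$$ P\bigl(T_{j+1} - T_j > \tau \bigm| T_j < \infty\bigr) \leq 4^d e^{-\tau}. $$

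Iterating over the $D \leq d$ stages, a further union bound yields~$P(T_D > D \tau) \leq d \cdot 4^d e^{-\tau}$, which is at most~$\ep / 2$ as soon as~$\tau = \log(2d \cdot 4^d / \ep)$. Setting~$T = d \tau$ then gives the lemma. The only real obstacle is the first step: once the instant-fill/immortality principle at~$\beta_2 = \infty$ is rigorously formulated, the remainder is a routine union bound over finitely many independent exponential clocks of rate~$1$.
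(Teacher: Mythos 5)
Your proposal is correct and follows essentially the same route as the paper: both arguments rest on the observation that at~$\beta_2 = \infty$ the set of~2s can only grow (any dying~2 with a~2 neighbor is instantly refilled), and then bound the time to invade~$\Lambda_+ \setminus \Lambda_-$ by waiting for rate-one death marks at sites ordered so that each has an already-invaded neighbor. The only difference is bookkeeping: the paper linearizes the order and sums~$K = 4^d - 2^d$ independent exponentials, whereas you process the sites in at most~$d$ distance shells in parallel with a union bound; both yield the required finite deterministic~$T$.
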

\begin{proof}
 Each time a~2 dies at some site~$x \in \Lambda_+$, there is a~2 at some site in~$N_x$ that can send instantaneously an offspring to site~$x$, which shows that the set of~2s can only increase.
 To prove full invasion of the~2s, we need to deal with two problems:
\begin{itemize}
 \item potential~1s in the set~$\Lambda_+ \setminus \Lambda_-$ that block the~2s, \vspace*{4pt}
 \item the fact that particles in~$\Lambda_-$ might not be able to send their offspring to a corner of~$\Lambda_+$, which is the case for instance in the presence of nearest neighbor interactions.
\end{itemize}
 However, regardless of the range, there exist~$x_1, x_2, \ldots, x_K$ with~$K = 4^d - 2^d$ such that
 $$ \Lambda_+ \setminus \Lambda_- = \{x_1, x_2, \ldots, x_K \} \quad \hbox{and} \quad (\Lambda_- \cup \{x_1, \ldots, x_{i - 1}) \cap N_{x_i} \neq \varnothing. $$
 In particular, the time it takes for the~2s to invade~$\Lambda_+$ is smaller than the time~$S$ it takes to have one death mark at~$x_1$ then one death mark at~$x_2$, and so on.
 By memoryless, this is the sum of~$K$ independent exponential random variables~$X_i$ with mean one therefore, for all~$\ep > 0$,
 $$ \begin{array}{l}
      P_{\infty} (\bar \xi_T (x) = 2 \ \hbox{for all} \ x \in \Lambda_+) \geq
      P (S < T) \vspace*{4pt} \\ \hspace*{40pt} \geq
      P (X_i < T / K \ \hbox{for all} \ i = 1, 2, \ldots, K) = (1 - e^{- T/K})^K \geq 1 - \ep / 2 \end{array} $$
 for all~$T$ sufficiently large.
 This completes the proof.
\end{proof} \\ \\
 Applying a perturbation argument like in the proof of Lemma~\ref{lem:CP}, we deduce that for all~$\ep > 0$, there exist sufficiently large~$T < \infty$ and~$\beta_+ < \infty$ such that
\begin{equation}
\label{eq:perturbation}
  P_{\beta_2} (\bar \xi_T (x) = 2 \ \hbox{for all} \ x \in \Lambda_+) \geq 1 - \ep \quad \hbox{for all} \quad \beta_2 > \beta_+.
\end{equation}
 Because the set of~2s in the modified process~$\bar \xi_t$ is dominated by its counterpart in the original allelopathic model with the same parameters and because the evolution rules of the allelopathic model are translation invariant in space, it follows from~\eqref{eq:perturbation} that the set of~2s in the process~$\xi_t$ properly rescaled in space and time dominates supercritical oriented site percolation.
 More precisely, call site~$(m, n) \in \Lat$ a good site whenever the following two events occur:
 $$ \begin{array}{rcl}
     E_{m, n} & \n = \n & \hbox{the box $m + \Lambda_-$ is fully occupied by 2s at time~$nT$} \vspace*{4pt} \\
     F_{m, n} & \n = \n & \hbox{the space-time box~$(m, nT) + (\Lambda_- \times [0, T])$ does not contain any 1s}. \end{array} $$
 Then, it follows from~\eqref{eq:perturbation} that for all~$\beta_1$ and~$\gamma$, and for all~$\ep > 0$, there exists a collection of good events~$G_{m, n}$ such that, for all~$T$ and~$\beta_+$ large,
 $$ \begin{array}{rl}
     \hbox{(a)} & P_{\beta_2} (G_{m, n}) \geq 1 - \ep \vspace*{4pt} \\
     \hbox{(b)} & E_{m, n} \cap G_{m, n} \ \Longrightarrow \ E_{m', n'} \cap F_{m', n'} \ \ \hbox{for all} \ \ (m', n') \leftarrow (m, n) \end{array} $$
 for all~$\beta_2 > \beta_+$.
 In addition, because Lemma~\ref{lem:infinity} applies to the modified process~$\bar \xi_t$, the event~$G_{m, n}$ can be made measurable with respect to the graphical representation of the process in
 $$ (m, nT) + ([- M - 1, M + 2]^d \times [0, 2T]). $$ 
 As previously, this shows that the set of good sites dominates the set of wet sites in an oriented site percolation process with parameter~$p = 1 - \ep$.
 Choosing~$\ep > 0$ small enough to ensure percolation of the open sites and the lack of percolation of the closed sites, we conclude that the susceptible species survives whereas the inhibitory species goes extinct.


\section{Proof of Theorem~\ref{th:GBT}}
\label{sec:GBT}
 This section gives a coupling of the allelopathic model and the grass-bush-tree system introduced by Durrett and Swindle~\cite{Durrett_Swindle_1991} from which Theorem~\ref{th:GBT} follows easily.
 Changing their labeling of the states to facilitate the comparison with the allelopathic model, the sites in the grass-bush-tree system can be in state~0~=~grass, 1~=~tree, and~2~=~bush.
 The process, denoted by~$\zeta_t$, is a variant of the multitype contact process modeling ecological successions in which the tree is a superior competitor that can give birth onto both grass and bushes, so the transition rates are
 $$ \begin{array}{rclcrcl}
      0, 2 \to 1 & \hbox{at rate} & \beta_1 f_1 (x, \zeta) & \qquad & 1 \to 0 & \hbox{at rate} & 1 \vspace*{4pt} \\
         0 \to 2 & \hbox{at rate} & \beta_2 f_2 (x, \zeta) & \qquad & 2 \to 0 & \hbox{at rate} & 1. \end{array} $$
 Note that the dynamical structure of the process is the same as that of the allelopathic model except that the death rate of the~2s is no longer density dependent, which favors the~2s, but the~1s can give birth onto the~2s, which favors the~1s.
 Assuming that~$\gamma \leq \beta_1$, the two processes can be constructed from the same graphical representation to define a coupling~$(\xi_t, \zeta_t)$ as follows.
\begin{itemize}
 \item Place an exponential clock with rate~$\gamma / N$ along each directed edge~$\vec{xy}$.
       Each time the clock rings, say at time~$t$, draw an arrow~$(x, t) \to (y, t)$ labeled with a~0.
       For the allelopathic model~(first coordinate), if the tail is occupied by a~1 and the head is in state~0 then the~1 gives birth through the arrow while if the tail is occupied by a~1 and the head is in state~2 then the~2 dies.
       For the grass-bush-tree system~(second coordinate), if the tail is occupied by a~1 and the head is in state~0 or~2 then the~1 gives birth through the arrow. \vspace*{4pt}
 \item Place an exponential clock with rate~$(\beta_1 - \gamma) / N$ along each directed edge~$\vec{xy}$.
       Each time the clock rings, say at time~$t$, draw an arrow~$(x, t) \to (y, t)$ labeled with a~1.
       For the allelopathic model, if the tail is occupied by a~1 and the head is in state~0 then the~1 gives birth through the arrow.
       For the grass-bush-tree system, if the tail is occupied by a~1 and the head is in state~0 or~2 then the~1 gives birth through the arrow. \vspace*{4pt}
 \item Place an exponential clock with rate~$\beta_2 / N$ along each directed edge~$\vec{xy}$.
       Each time the clock rings, say at time~$t$, draw an arrow~$(x, t) \to (y, t)$ labeled with a~2.
       If the tail is occupied by a~2 and the head is in state~0 then, for both the allelopathic model and the grass-bush-tree system, the~2 gives birth through the arrow. \vspace*{4pt}
 \item Finally, place an exponential clock with rate one at each vertex~$x$.
       Each time the clock rings, say at time~$t$, put a cross at~$(x, t)$ to indicate that, for both the allelopathic model and the grass-bush-tree system, site~$x$ becomes empty.
\end{itemize}
 Note that type~0 arrows create more~0s in the allelopathic model and more~1s in the grass-bush-tree system.
 In particular, starting both processes from the same initial configuration and using the graphical representation above, one expects that, for all~$(x, t) \in \Z^d \times \R_+$,
\begin{equation}
\label{eq:ordering}
\xi_t (x) = 1 \ \Longrightarrow \ \zeta_t (x) = 1 \qquad \hbox{and} \qquad \xi_t (x) = 2 \ \Longleftarrow \ \zeta_t (x) = 2.
\end{equation}
 To prove this result, it suffices to prove that the set of states
 $$ S = \{(0, 0), (0, 1), (1, 1), (2, 0), (2, 1), (2, 2) \} $$
 is closed under the dynamics of the coupling~$(\xi_t, \zeta_t)$.
 Regardless of the state of the coupling, a cross will turn this state into~$(0, 0) \in S$ so the crosses do not create new states not in~$S$.
 Similarly, checking the effect of each type of arrow on all possible configurations, we get
 $$ \begin{array}{cccccccccccccc}
         &                                                 & \hbox{before} & \hbox{after} & \quad &
         &                                                 & \hbox{before} & \hbox{after} & \quad &
         &                                                 & \hbox{before} & \hbox{after} \\         
  (0, 1) & \n \overset{\hbox{\tiny 0}}{\longrightarrow} \n & (0, 0)        & (0, 1) &&
  (0, 1) & \n \overset{\hbox{\tiny 1}}{\longrightarrow} \n & (0, 0)        & (0, 1) &&
  (2, 0) & \n \overset{\hbox{\tiny 2}}{\longrightarrow} \n & (0, 0)        & (2, 0) \\
  (0, 1) & \n \overset{\hbox{\tiny 0}}{\longrightarrow} \n & (2, 0)        & (2, 1) &&
  (0, 1) & \n \overset{\hbox{\tiny 1}}{\longrightarrow} \n & (2, 0)        & (2, 1) &&
  (2, 0) & \n \overset{\hbox{\tiny 2}}{\longrightarrow} \n & (0, 1)        & (2, 1) \\
  (0, 1) & \n \overset{\hbox{\tiny 0}}{\longrightarrow} \n & (2, 2)        & (2, 1) &&
  (0, 1) & \n \overset{\hbox{\tiny 1}}{\longrightarrow} \n & (2, 2)        & (2, 1) &&
  (2, 1) & \n \overset{\hbox{\tiny 2}}{\longrightarrow} \n & (0, 0)        & (2, 0) \\
  (1, 1) & \n \overset{\hbox{\tiny 0}}{\longrightarrow} \n & (0, 0)        & (1, 1) &&
  (1, 1) & \n \overset{\hbox{\tiny 1}}{\longrightarrow} \n & (0, 0)        & (1, 1) &&
  (2, 1) & \n \overset{\hbox{\tiny 2}}{\longrightarrow} \n & (0, 1)        & (2, 1) \\
  (1, 1) & \n \overset{\hbox{\tiny 0}}{\longrightarrow} \n & (0, 1)        & (1, 1) &&
  (1, 1) & \n \overset{\hbox{\tiny 1}}{\longrightarrow} \n & (0, 1)        & (1, 1) &&
  (2, 2) & \n \overset{\hbox{\tiny 2}}{\longrightarrow} \n & (0, 0)        & (2, 2) \\
  (1, 1) & \n \overset{\hbox{\tiny 0}}{\longrightarrow} \n & (2, 0)        & (0, 1) &&
  (1, 1) & \n \overset{\hbox{\tiny 1}}{\longrightarrow} \n & (2, 0)        & (2, 1) &&
  (2, 2) & \n \overset{\hbox{\tiny 2}}{\longrightarrow} \n & (0, 1)        & (2, 1) \\
  (1, 1) & \n \overset{\hbox{\tiny 0}}{\longrightarrow} \n & (2, 1)        & (0, 1) &&
  (1, 1) & \n \overset{\hbox{\tiny 1}}{\longrightarrow} \n & (2, 2)        & (2, 1) &&
  (2, 2) & \n \overset{\hbox{\tiny 2}}{\longrightarrow} \n & (2, 0)        & (2, 2) \\
  (1, 1) & \n \overset{\hbox{\tiny 0}}{\longrightarrow} \n & (2, 2)        & (0, 1) &&
  (2, 1) & \n \overset{\hbox{\tiny 1}}{\longrightarrow} \n & (0, 0)        & (0, 1) \\
  (2, 1) & \n \overset{\hbox{\tiny 0}}{\longrightarrow} \n & (0, 0)        & (0, 1) &&
  (2, 1) & \n \overset{\hbox{\tiny 1}}{\longrightarrow} \n & (2, 0)        & (2, 1) \\
  (2, 1) & \n \overset{\hbox{\tiny 0}}{\longrightarrow} \n & (2, 0)        & (2, 1) &&
  (2, 1) & \n \overset{\hbox{\tiny 1}}{\longrightarrow} \n & (2, 2)        & (2, 1) \\
  (2, 1) & \n \overset{\hbox{\tiny 0}}{\longrightarrow} \n & (2, 2)        & (2, 1) \end{array} $$
 Because~$\card (S) = 6$, there are~36 possible pairs of states at the tail and head of the arrows before an interaction but we have only listed the pairs for which an update indeed happens, i.e., the states at the head of the arrow before and after the interaction are different.
 Note that all the states generated by the coupling remain in~$S$ therefore~\eqref{eq:ordering} holds.
 In addition, Durrett and Swindle~\cite{Durrett_Swindle_1991} proved the following coexistence result for the grass-bush-tree system:
\begin{equation}
\label{eq:GBT-DS}
\begin{array}{rcl} \beta_2 > \beta_1^2 > 1 \quad \hbox{and} \quad M \ \hbox{large} & \Longrightarrow & \hbox{coexistence}. \end{array}
\end{equation}
 In particular, Theorem~\ref{th:GBT} follows from~\eqref{eq:GBT-DS} and the second implication in~\eqref{eq:ordering} which states that the~2s in the allelopathic model dominate the~2s in the grass-bush-tree system.


\section{Proof of Theorem~\ref{th:MCP-dual}}
\label{sec:MCP-dual}
 This section is devoted to the proof of Theorem~\ref{th:MCP-dual} which states that, at least in~$d \geq 3$, when both species are equally fit, even weak inhibitory effects drive the susceptible species to extinction.
 The proof is mostly based on duality techniques even though, strictly speaking, the allelopathic model does not have a tractable dual process.
 The first step is to describe the dual processes of the contact process and the symmetric multitype contact process. \\
\indent
 To begin with, we think of the basic contact process with birth parameter~$\beta$ as being generated by the graphical representation that has unlabeled birth arrows from each site to each of their neighbors at rate~$\beta / N$ and crosses/death marks at each site at rate one.
 Given a realization of this graphical representation and two times~$0 < s < t$, we say that there is
 $$ \hbox{a path~$(y, t - s) \uparrow (x, t)$} \qquad \hbox{or} \qquad \hbox{a dual path~$(x, t) \downarrow (y, t - s)$} $$
 if we can go from one space-time point to the other one by moving forward/backward in time in the graphical representation, crossing the birth arrows in their direction/opposite of their direction, and avoiding the death marks.
 More precisely, there are sites and times
 $$ y = x_1, x_2, \ldots, x_n = x \quad \hbox{and} \quad t - s = s_0 < s_1 < \cdots < s_n = t $$
 such that the following two conditions hold:
\begin{itemize}
 \item there is a birth arrow~$(x_i, s_i) \to (x_{i + 1}, s_i)$ for all~$i = 1, 2, \ldots, n - 1$ and \vspace*{4pt}
 \item the segments~$\{x_i \} \times (s_{i - 1}, s_i)$ are void of death marks~$\times$ for~$i = 1, 2, \ldots, n$.
\end{itemize}
 The dual process starting at~$(x, t)$ is the set-valued process
\begin{equation}
\label{eq:dual}
\hat \xi_s^{(x, t)} = \{y \in \Z^d : \hbox{there is a dual path} \ (x, t) \downarrow (y, t - s) \} \quad \hbox{for all} \quad 0 \leq s \leq t.
\end{equation}
 By construction of the contact process from the graphical representation, a space-time point~$(x, t)$ is occupied if and only if it can be reached from a path starting at~$(y, 0)$ for some~$y$ that is initially occupied.
 This happens if and only if at least one of the dual paths starting at~$(x, t)$ lands on an occupied site, which gives the duality relationship
\begin{equation}
\label{eq:relationship}
\begin{array}{rcl} x \in \xi_t & \Longleftrightarrow & \hat \xi_t^{(x, t)} \cap \xi_0 \neq \varnothing \end{array}
\end{equation}
 where we have identified the configuration of the process with the set of occupied sites.
 In other words, the dual process keeps track of the potential ancestors of~$(x, t)$, and we refer to the left-hand side of Figure~\ref{fig:dual} for a picture.
 Note that the dual process exhibits a tree structure with branchings along the birth arrows.
 In our example,~$(x, t)$ is occupied if and only if at least one of the four sites in black at the bottom of the picture is occupied at time zero. \\
\indent
 We now give a description of the dual process of Neuhauser's multitype contact process.
 In the symmetric case~$\beta_1 = \beta_2 = \beta$, the process can be constructed from the same graphical representation as the basic contact process assuming that both types of individuals can give birth through the arrows.
 In particular, the duality relationship~\eqref{eq:relationship} still holds: space-time point~$(x, t)$ is occupied if and only if the dual process starting at this point intersects at least one occupied site in the initial configuration.
\begin{figure}[t!]
\centering
\scalebox{0.35}{\input{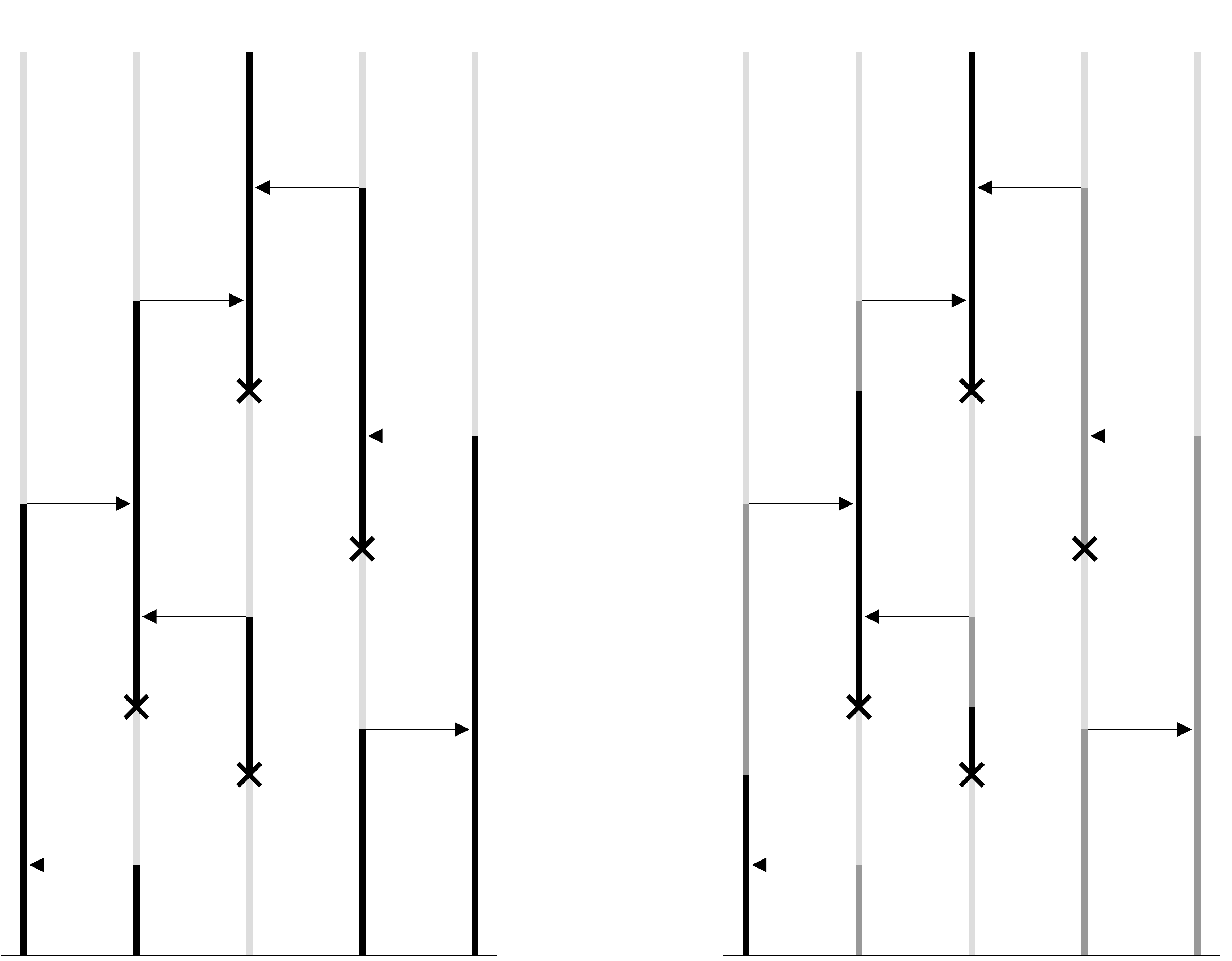_t}}
\caption{\upshape{
 Dual process of the contact process on the left and dual process of the neutral multitype contact process along with the ancestor hierarchy on the right.
 In the right picture, the black line keeps track of the distinguished particle while the numbers show the ancestor hierarchy at various times.}}
\label{fig:dual}
\end{figure}
 However, when the intersection contains particles of different types, the type at point~$(x, t)$, as opposed to its occupancy, is unclear.
 Neuhauser~\cite{Neuhauser_1992} proved that the dual process of the symmetric multitype contact process consists of the dual process of the basic contact process along with an ancestor hierarchy in which the members can be arranged according to the order they determine the type of~$(x, t)$.
 We refer to Neuhauser~\cite{Neuhauser_1992} for a description of this ancestor hierarchy, and to Lanchier and Neuhauser~\cite{Lanchier_Neuhauser_2006} for a more formal definition using a labeling of the tree structure and the lexicographic order.
 The right-hand side of Figure~\ref{fig:dual} shows the ancestor hierarchy at various times.
 If the site labeled~1 at the bottom of the picture is initially occupied then~$(x, t)$ is of the same type as this site, if the site labeled~1 is initially empty but the site labeled~2 is initially occupied then~$(x, t)$ is of the same type as this site, and so on.
 In general,~$(x, t)$ is of the same type as the ancestor with the smallest index that is initially occupied. \\
\indent
 The first ancestor in the hierarchy plays a key role in the analysis of the multitype contact process and the allelopathic model.
 This ancestor is also called the distinguished particle, and its position at time~$t - s$ = dual time~$s$ is denoted by~$\hat \xi_s^{(x, t)} (1)$.
 Roughly speaking, the distinguished particle starts at site~$x$ and jumps to the lowest existing branch of the tree structure each time it meets a death mark going backward in time~(see the black path in Figure~\ref{fig:dual}).
 To study the evolution of the distinguished particle, it is convenient to assume that the exponential clocks in the graphical representation are defined for negative times so the process~\eqref{eq:dual} is well-defined for all~$s > 0$.
 Then, we say that~$(x, t)$ lives forever if the dual process starting at this point is nonempty for all times.
 Each time the distinguished particle jumps onto a point that lives forever, this point is called a renewal point.
 Because the distinguished particle is trapped in the~(infinite) subtree starting from a renewal point, its evolution before and after a renewal point are determined by disjoint parts of the graphical representation.
 In particular, the space-time displacements between consecutive renewal points are independent and identically distributed.
 Neuhauser also proved that these space-time displacements have exponentially decaying tails from which she deduced that, like simple symmetric random walks, the process that keeps track of the distinguished particle is recurrent in one and two dimensions and transient in higher dimensions. \\
\indent
 To study the allelopathic model when the two species are equally fit, we construct the process using the graphical representation of the neutral multitype contact process that has birth arrows and death marks, and also add like in Figure~\ref{fig:GR} type~0 arrows, that we call kill arrows from now on, from each site to each of their neighbors at rate~$\gamma / N$.
 The key to proving the theorem is to show that, when~$(x, t)$ lives forever, there is a sequence of distinguished particles that start at the tail of kill arrows pointing at the first ancestor of~$(x, t)$, live forever and never coalesce.
 To begin with, we follow the path of the distinguished particle starting at~$(x, t)$ going backward in time, let~$\sigma_i$ be the~$i$th time we cross the head of a birth arrow, let~$x_i$ be the location of the tail of this arrow, and let~$y_i$ be the location of the head of this arrow.
 Then, we define the events
 $$ \begin{array}{rcl}
     A_i & \n = \n & \hbox{there is one kill arrow~$x_i \to y_i$ in the time interval~$(\sigma_i - 1, \sigma_i)$ and no} \\
         & \n   \n & \hbox{crosses at or other arrows pointing at one of those two sites in this time interval}. \end{array} $$
 In case~$A_i$ occurs, we let~$\tau_i \in (\sigma_i - 1, \sigma_i)$ be the time at which the kill arrow appears.
\begin{lemma}
\label{lem:Ai}
 Assume that~$\gamma > 0$. Then, $P (\limsup_{i \to \infty} A_i) = 1$.
\end{lemma}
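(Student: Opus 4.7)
The approach is a conditional Borel--Cantelli argument exploiting the independence of Poisson processes on disjoint time intervals. The key first observation is that the event $A_i$ depends only on the graphical representation at the two sites $x_i$ and $y_i$ during the unit interval $(\sigma_i-1,\sigma_i)$: exactly one kill arrow $x_i\to y_i$ there, no death marks at $x_i$ or $y_i$, and no other arrows incident to $\lbrace x_i,y_i\rbrace$. The triple $(\sigma_i,x_i,y_i)$ is by construction measurable with respect to the graphical representation at times $\geq \sigma_i$, since it is read off the distinguished particle's backward trajectory. Because the $A_i$-slab lies strictly below $\sigma_i$ in real time, the Poisson independence on disjoint time sets yields
\[
 P\bigl(A_i \,\big|\, \sigma_i,x_i,y_i,\ \text{graphical rep.\ at times}\geq \sigma_i\bigr) \;\geq\; p
\]
for a positive constant $p=p(\gamma,\beta_1,\beta_2,N)>0$ obtained by an elementary calculation with the exponential clock rates.

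The obstacle is that neighboring $A_i$'s may involve overlapping slabs in space-time, so the events are not directly independent. To handle this, I would extract a subsequence $i_1<i_2<\cdots$ along which the intervals $(\sigma_{i_k}-1,\sigma_{i_k})$ are pairwise disjoint, e.g.\ by the greedy rule $i_{k+1}=\min\lbrace j>i_k:\sigma_j\leq \sigma_{i_k}-1\rbrace$. This subsequence is almost surely infinite: between two consecutive birth-arrow crossings, the distinguished particle waits for the next clock ring at its current location, and by the Poisson structure this wait exceeds $1$ with probability bounded below by $e^{-(1+\beta_1+\beta_2)}>0$, uniformly in the past, so infinitely many such large-gap waits occur.

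With this subsequence in hand, set $\mathcal{G}_k=\sigma(\text{graphical representation at times}\geq \sigma_{i_k})$, an increasing filtration in $k$. Temporal separation of the slabs guarantees $A_{i_{k-1}}\in\mathcal{G}_k$, while the first step gives $P(A_{i_k}\mid\mathcal{G}_k)\geq p$ almost surely. L\'evy's extension of the second Borel--Cantelli lemma, applied to the adapted sequence $\mathbf{1}_{A_{i_k}}$ with filtration $(\mathcal{G}_{k+1})_k$, then gives $\sum_k\mathbf{1}_{A_{i_k}}=\infty$ almost surely, and hence $P(\limsup_{i\to\infty}A_i)=1$. The main technical difficulty lies in the bookkeeping of the first two steps: one must verify precisely which portions of the graphical representation have been revealed by the backward exploration at each stage, so that the conditional lower bound $p$ holds independently of $k$, and check that the greedy subsequence really is a.s.\ infinite rather than terminating after finitely many terms.
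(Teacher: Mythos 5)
Your proposal is correct and follows essentially the same route as the paper: an explicit positive lower bound on $P(A_i)$ from the Poisson clock rates, temporal separation of the slabs $(\sigma_i-1,\sigma_i)$ to decouple the events, and the second Borel--Cantelli lemma along the resulting subsequence. Your use of the conditional (L\'evy) form of Borel--Cantelli with the backward filtration is a more careful implementation of the paper's brief claim that events with $|\sigma_i-\sigma_j|>1$ are independent, since it properly handles the fact that the times $\sigma_i$ and locations $x_i,y_i$ are themselves random and revealed by the backward exploration.
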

\begin{proof}
 Looking at the rates of the exponential clocks in the graphical representation and using that these exponential clocks are independent, we get
\begin{equation}
\label{eq:Ai1}
\begin{array}{rcl}
  P (A_i) & \n = \n & P (\poisson (\gamma / N) = 1) \times P (\poisson (2 (\gamma + \beta + 1) - \gamma / N) = 0) \vspace*{4pt} \\
          & \n = \n & (\gamma / N) \,\exp (- 2 (\gamma + \beta + 1)) > 0. \end{array}
\end{equation}
 In addition, whenever~$|\sigma_j - \sigma_i| > 1$, the two events~$A_i$ and~$A_j$ are independent, thus showing the existence of a subsequence of events that are independent and have a positive probability~\eqref{eq:Ai1}.
\begin{figure}[t!]
\centering
\scalebox{0.35}{\input{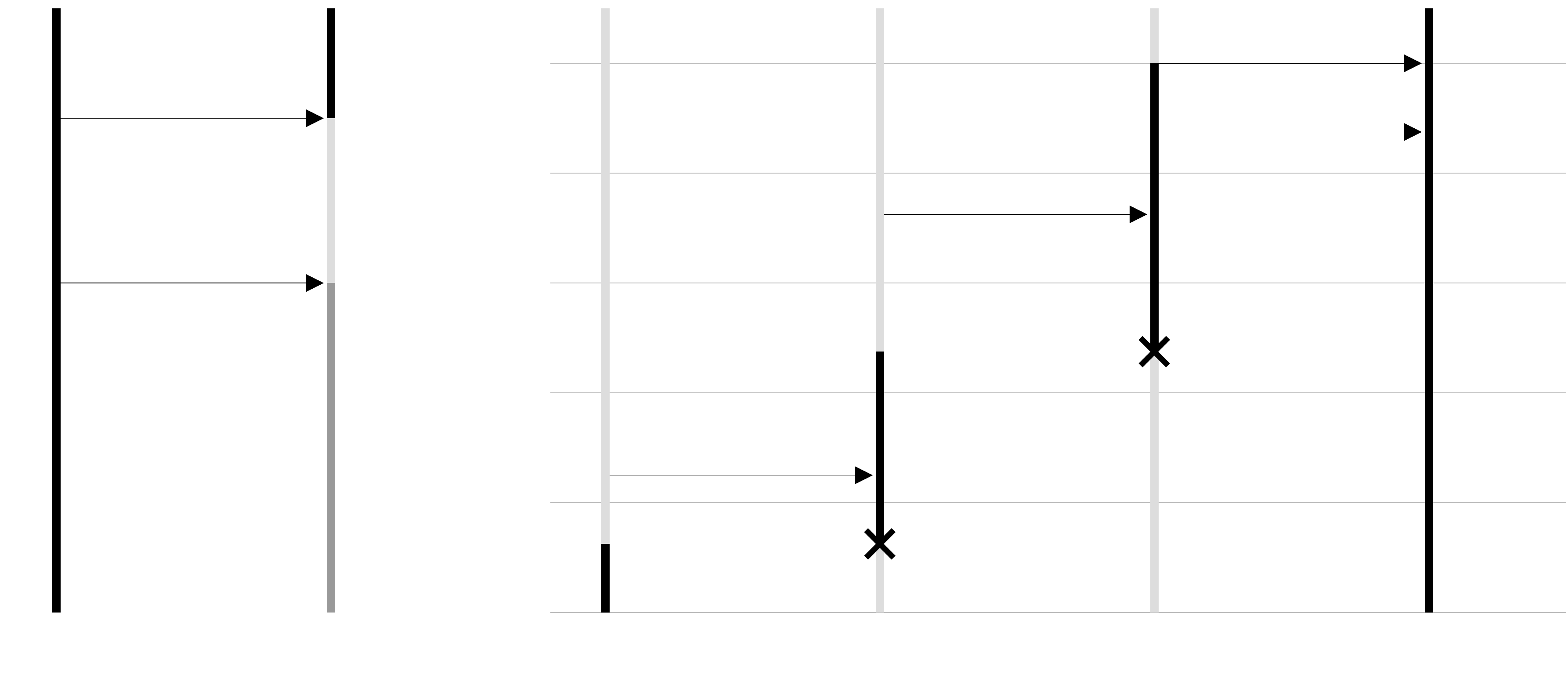_t}}
\caption{\upshape{
 Picture of the event~$A_i$ on the left and of the event~$A_i \cap C_i$ on the right.}}
\label{fig:ACi}
\end{figure}
 In particular, the result follows from the second Borel-Cantelli lemma.
\end{proof} \\ \\
 In view of Lemma~\ref{lem:Ai}, there exists a subsequence~$A_{i_k}$ of events that all occur.
 We again call this subsequence~$A_i$ to avoid cumbersome notation.
 Note that, whenever~$(x_i, \sigma_i)$ is of type~1, this~1 must have killed earlier any potential~2 at~$(y_i, \tau_i)$ thanks to the presence of the kill arrow therefore~$(y_i, \sigma_i)$ is of type~1 as well.
 See the left-hand side of Figure~\ref{fig:ACi} for an illustration where as previously black means type~1 and gray means type~2.
 Because point~$(y_i, \sigma_i)$ is on the path of the distinguished particle starting at~$(x, t)$, this implies that~$(x, t)$ also is of type~1.
 In particular, to prove the theorem, it suffices to show that, whenever~$d \geq 3$,
\begin{equation}
\label{eq:Ai2}
\hbox{$(x_i, \sigma_i)$ is of type~1} \quad \hbox{for some} \quad i \in \N.
\end{equation}
 Now, define the new collection of events
 $$ \begin{array}{rcl} B_i & \n = \n & \hbox{the space-time point~$(x_i, \sigma_i)$ lives forever}. \end{array} $$
\begin{lemma}
\label{lem:Bi}
 Assume that~$\beta > \beta_c$. Then, $P (\limsup_{i \to \infty} B_i) = 1$.
\end{lemma}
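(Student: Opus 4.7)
My plan is to deduce $P(\limsup_i B_i)=1$ from a conditional Borel--Cantelli argument exploiting the independence between the graphical representation above and below each $\sigma_i$. First, I note that under the extension of the graphical representation to negative times, the decreasing sequence of birth-arrow crossing times $\sigma_1 > \sigma_2 > \cdots$ is almost surely infinite with $\sigma_i \to -\infty$, because the distinguished particle is exposed to incoming birth arrows at the positive rate $\beta$ wherever it sits. Second, by translation invariance of the graphical representation, the probability that a prescribed space-time point $(z,s)$ lives forever is a constant $\rho$ independent of $(z,s)$, and $\rho > 0$ under the supercriticality assumption $\beta > \beta_c$ by the standard survival theorem of Harris~\cite{Harris_1974} and Bezuidenhout and Grimmett~\cite{Bezuidenhout_Grimmett_1990}.

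The key independence observation is that the random point $(x_i, \sigma_i)$ depends only on the graphical representation at real times in $[\sigma_i, t]$---this is exactly the window in which the distinguished particle's trajectory from $(x,t)$ down to $\sigma_i$ is determined---whereas the event $B_i$ depends only on the graphical representation at real times strictly less than~$\sigma_i$. Setting $\mathcal{F}_i = \sigma(\hbox{graphical representation on } [\sigma_i, t])$, which is an increasing filtration in $i$, this yields
\[ P(B_i \mid \mathcal{F}_i) = \rho \quad \hbox{a.s.} \]
Because $B_i$ is itself not measurable with respect to any $\mathcal{F}_j$, I would first pass through the truncated events $B_i^{(L)} = \{\hat\xi_s^{(x_i, \sigma_i)} \neq \varnothing \hbox{ for all } 0 \leq s \leq L \}$, which are measurable with respect to the graphical representation on $[\sigma_i - L, t]$. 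Along any subsequence $(i_k)$ chosen so that $\sigma_{i_{k+1}} < \sigma_{i_k} - L$, the truncated events are adapted to a natural increasing filtration and each has conditional probability at least~$\rho$ given the past, so L\'evy's conditional Borel--Cantelli lemma gives $\limsup_k B_{i_k}^{(L)}$ almost surely.

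The main obstacle is the final step: upgrading ``$\limsup_i B_i^{(L)}$ for every $L$'' to ``$\limsup_i B_i$''. I would handle this using the renewal point structure reviewed earlier and established in Neuhauser~\cite{Neuhauser_1992}: once the dual process from $(x_i, \sigma_i)$ has survived to a sufficiently large dual time $L$, it contains a renewal point with probability arbitrarily close to one in $L$, and a renewal point lives forever by definition. A diagonal extraction combined with this renewal-based upgrade then converts the truncated infinitely-often statement into the desired conclusion $P(\limsup_i B_i)=1$.
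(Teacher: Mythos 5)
Your setup is sound: you correctly identify that $B_i$ is not measurable with respect to any finite window of the graphical representation, that $(x_i,\sigma_i)$ is determined by the graphical representation on $[\sigma_i,t]$ while $B_i$ depends only on what lies below $\sigma_i$, and that a conditional Borel--Cantelli argument therefore applies cleanly only to the truncated events $B_i^{(L)}$. That part of the argument works. The genuine gap is in the final upgrade. For each fixed $L$ you obtain that the random index set $N_L=\{i: B_i^{(L)} \hbox{ occurs}\}$ is almost surely infinite, but these sets are nested decreasing in $L$ and $\bigcap_L N_L=\{i: B_i \hbox{ occurs}\}$; a decreasing family of infinite subsets of $\N$ can perfectly well have finite or empty intersection, so ``infinitely many survive to depth $L$, for every $L$'' does not imply ``infinitely many survive forever.'' The renewal-point remark does not repair this, because a renewal point is by definition a point that lives forever, so testing for one is exactly the unbounded-window event you are trying to control; at best, stopping at the first $k\geq n$ with $B_{i_k}^{(L)}$ and using that survival from a nonempty set at dual depth $L$ has conditional probability at least $\rho$ yields $P(\bigcup_{k\geq n}B_{i_k})\geq\rho$, not $=1$.

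The paper closes this gap with a different mechanism, a restart argument: on $B_i^c$ the dual process started at $(x_i,\sigma_i)$ dies out at an almost surely finite dual time, so the failure of $B_i$ is witnessed by a bounded (random) portion of the graphical representation; taking $j$ minimal with $\sigma_j$ below that portion, the event $B_j$ is determined by fresh, independent randomness and has unconditional probability equal to the survival probability of the supercritical contact process (by self-duality), hence positive. Iterating, a success occurs after a geometric number of restarts, giving $P(\bigcup_{j\geq n}B_j)=1$ for every $n$ and hence $P(\limsup_i B_i)=1$. If you want to salvage your truncation scheme, the missing ingredient is precisely this observation that, \emph{on the complement of} $B_i$, the event is measurable with respect to a bounded random window below which one can restart; without it, the diagonal extraction you invoke does not go through.
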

\begin{proof}
 To begin with, note that, by self-duality of the contact process, the probability of~$B_i$ is equal to the probability of survival of the contact process with parameter~$\beta$, which is positive because~$\beta > \beta_c$.
 Now, assume that~$B_i$ does not occur.
 Letting~$j$ be the smallest integer larger than~$i$ such that the dual process starting at~$(x_i, \sigma_i)$ dies out in less than~$\sigma_i - \sigma_j$ units of time,~$B_j$ is determined by parts of the graphical representation that are disjoint from the parts of the graphical representation that determine the dual process starting at~$(x_i, \sigma_i)$.
 Because disjoint parts of the graphical representation are independent, we deduce that
 $$ P (B_j \,| \,B_i^c) = P (B_j) = P (\hat \xi_s^{(x_j, \sigma_j)} \neq \varnothing \ \hbox{for all} \ s) > 0. $$
 Using also that the events~$B_i$ are positively correlated implies that, with probability one, infinitely many of the events~$B_i$ occur, which proves the lemma.
\end{proof} \\ \\
 The lemma implies that there exists a subsequence~$A_{i_k} \cap B_{i_k}$ of events that all occur.
 As previously, we call this subsequence~$A_i \cap B_i$ to simplify the notation.
 Because the~$(x_i, \sigma_i)$ live forever, the distinguished particles starting from those points also live forever, and the next step is to study how these particles interact.
 Because these particles start at different times,
 $$ \xi_s^{(x_i, \sigma_i)} (1) \quad \hbox{and} \quad \xi_s^{(x_j, \sigma_j)} (1) \quad \hbox{for} \quad i \neq j $$
 do not refer to positions of different distinguished particles at the same time so, to facilitate the comparison, we reindex the processes using the actual time instead of the dual time.
 To also include the distinguished particle starting at~$(x, t)$ in the list, we let
 $$ \zeta_s^i = \hat \xi_{\sigma_i - s}^{(x_i, \sigma_i)} (1) \quad \hbox{for all} \quad i \geq 0 \quad \hbox{where} \quad (x_0, \sigma_0) = (x, t). $$
 The process~$\zeta_s^i$ is well-defined for all~$i \leq \sigma_i$, including negative~$s$ since the graphical representation has been extended to negative times.
 Now, observe that if the~$i$th distinguished particle lands on a~1 at time zero, meaning that~$\xi_0 (\zeta_0^i) = 1$, then space-time point~$(x_i, \sigma_i)$ must be of type~1.
 In particular, to prove~\eqref{eq:Ai2}, it suffices to show that
\begin{equation}
\label{eq:Bi1}
\xi_0 (\zeta_0^i) = 1 \quad \hbox{for some} \quad i \in \N.
\end{equation}
 To prove this, the idea is to use transience in dimensions~$d \geq 3$ to deduce that an arbitrarily large number of distinguished particles do not coalesce.
 More precisely, we will use the next result which follows from the proof of Neuhauser~\cite[Lemma~5.5]{Neuhauser_1992}.
\begin{lemma}
\label{lem:Neuhauser}
 Let~$d \geq 3$.
 There exists~$C > 0$ such that, for all~$K$ large,
 $$ \begin{array}{rcl} \norm{y - z} \geq K & \Longrightarrow & P (\norm{\hat \xi_s^{(y, t)} (1) - \hat \xi_s^{(z, t)} (1)} < s^{1/8} \ \hbox{for some} \ s) \leq CK^{- 1/10} + 2CK^{- 3/32}. \end{array} $$
\end{lemma}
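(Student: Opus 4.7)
The plan is to invoke the renewal structure of the distinguished particle developed by Neuhauser~\cite{Neuhauser_1992} and exploit the transience of the associated embedded random walk in~$d \geq 3$. Recall that, conditional on~$(y, t)$ living forever, the successive space-time displacements between consecutive renewal points of~$s \mapsto \hat \xi_s^{(y, t)} (1)$ are independent and identically distributed with exponentially decaying tails. Hence the restriction of the distinguished particle to its renewal skeleton is a mean-zero random walk with finite variance, and in~$d \geq 3$ this random walk is transient.

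The first step is to couple~$\hat \xi_s^{(y, t)} (1)$ and~$\hat \xi_s^{(z, t)} (1)$ with two independent copies of the distinguished particle started at~$y$ and~$z$ respectively. The distinguished particle starting at~$(y, t)$ is determined only by the portion of the graphical representation inside a space-time cone emanating from~$(y, t)$ whose slope is bounded in terms of the jump rates, and similarly for the one starting at~$(z, t)$. So long as these two random regions of the graphical representation are disjoint, the two particles are genuinely independent; the complementary event can be absorbed into one of the two error terms, as in~\cite[Section~5]{Neuhauser_1992}.

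The second step is to estimate the probability that the displacement~$D_s = \hat \xi_s^{(y, t)} (1) - \hat \xi_s^{(z, t)} (1)$ enters the ball of radius~$s^{1/8}$ for some~$s$, given that it starts at norm at least~$K$. Split the dual time axis at a threshold~$T = K^{\alpha}$ to be chosen. For~$s \leq T$, an exponential tail bound for the renewal increments combined with the union bound shows that~$D_s$ has not moved by more than, say,~$K / 4$, so the event cannot occur provided~$T^{1/8} \ll K$. For~$s > T$, one uses the Gaussian local limit behavior of~$D_s$ together with a hitting estimate: the transience in~$d \geq 3$ implies that the probability that the random walk~$D_s$, started at distance at least~$K$, ever enters a ball of radius of order~$R$ around the origin is bounded by a constant times~$(R/K)^{d-2}$. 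Applying this dyadically to the shells at distance~$s^{1/8}$ and summing yields a bound that is polynomial in~$K^{-1}$. Optimizing the exponent~$\alpha$ balances the two regimes and produces the explicit terms~$C K^{-1/10}$ and~$2 C K^{-3/32}$.

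The main obstacle is the union bound over~$s$, since~$s^{1/8}$ grows with~$s$ so that instantaneous density estimates for~$D_s$ are not summable on their own. This is exactly where Neuhauser's hitting-probability argument, which exploits transience at the level of the embedded renewal random walk rather than pointwise densities, becomes indispensable, and my proposal is to follow her proof of~\cite[Lemma~5.5]{Neuhauser_1992} verbatim after the two setup steps above, noting that the graphical representation we use here differs from hers only by the extra kill arrows, which do not appear in the definition of the distinguished particle and therefore have no effect on the estimate.
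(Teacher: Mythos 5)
Your proposal is correct and matches the paper's treatment: the paper gives no proof of this lemma at all, stating only that it ``follows from the proof of Neuhauser~[Lemma~5.5],'' which is exactly where your argument lands after unpacking the renewal structure, the transience of the embedded walk in~$d \geq 3$, and the hitting estimate. Your closing observation that the kill arrows play no role in the definition of the distinguished particle, so that Neuhauser's estimate for the contact-process dual applies verbatim, is precisely the point that justifies the citation.
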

 Motivated by this lemma, we define the events
 $$ \begin{array}{rcl} C_i & \n = \n & \hbox{the norm~$\norm{\zeta_{\sigma_i - 2K - 1}^i - \zeta_{\sigma_i - 2K - 1}^0}$ is larger than~$K$ but less than~$2K$}. \end{array} $$
\begin{lemma}
\label{lem:Ci}
 For all~$K > 0$, we have~$P (\limsup_{i \to \infty} C_i) = 1$.
\end{lemma}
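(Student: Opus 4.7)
The plan is to mimic the proof of Lemma~\ref{lem:Ai}: show that $P(C_i) \geq c_K$ for some positive constant $c_K$ independent of $i$, extract an independent subsequence $C_{i_k}$, and conclude with the second Borel--Cantelli lemma. For the lower bound, observe that at forward time $\sigma_i$ we have $\zeta^0_{\sigma_i} = y_i$ and $\zeta^i_{\sigma_i} = x_i$ with $\norm{x_i - y_i} \leq M$, and that over the next $2K+1$ units of backward time the pair $(\zeta^0, \zeta^i)$ evolves as a continuous-time walk driven by the graphical representation, with jumps of size bounded by $M$ occurring only at death marks. One can then write down an explicit event on the graphical representation in a space-time cylinder of height $2K+1$ around $x_i$ and $y_i$ that forces the separation at time $\sigma_i - 2K - 1$ to lie in~$(K, 2K)$: demand no death marks at $y_i$ during the interval, so that $\zeta^0$ stays put, together with roughly $\lceil K/M \rceil$ well-placed death marks at $\zeta^i$'s successive positions, each preceded by a birth arrow that steers the jump in a prescribed direction. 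By independence of the Poisson clocks and translation invariance of the graphical representation, the probability of this prescribed pattern is bounded below by a constant $c_K > 0$ uniformly in $i$.

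The main obstacle is independence of a subsequence, since the events $C_i$ are naturally correlated through the shared particle $\zeta^0$. To decouple them, I would exploit the renewal-point structure of the first ancestor recalled earlier in the paper: backward from $(x, t)$ there are almost surely infinitely many renewal points of $\zeta^0$, with exponentially decaying inter-renewal displacements in space and time. I would extract a subsequence $i_k$ so that (i)~the time windows $[\sigma_{i_k} - 2K - 1, \sigma_{i_k}]$ are pairwise disjoint and separated by more than $2K + 1$ in time, and (ii)~a renewal point of $\zeta^0$ lies between any two consecutive windows. Because the subtree of the graphical representation below a renewal point is disjoint from the part used above it, the events $C_{i_k}$ become determined by disjoint portions of the graphical representation, hence independent, and a standard application of the second Borel--Cantelli lemma gives $P(\limsup_k C_{i_k}) = 1$, which immediately implies $P(\limsup_i C_i) = 1$.

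The hard part is making the disjointness rigorous: the backward cones of influence of $\zeta^{i_k}$ must be shown to be disjoint from those of $\zeta^{i_{k'}}$ despite both being tethered to the history of $\zeta^0$, and one must verify that the backward trajectory of $\zeta^{i_k}$ does not wander into the part of the graphical representation used by other $\zeta^{i_{k'}}$ in a way that breaks the decoupling. Both issues are absorbed by the exponentially decaying tails of the displacements between renewal points, which allow us to choose the subsequence $i_k$ sparse enough that the relevant space-time regions are well-separated with probability arbitrarily close to~$1$, and the remaining small-probability bad events can be discarded without affecting the $\limsup$ statement.
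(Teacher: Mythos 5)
Your proposal follows essentially the same route as the paper: a uniform lower bound $P(C_i)\geq c_K>0$ obtained by prescribing an explicit pattern in the graphical representation (no marks at $\zeta^0$'s location so it stays put, and a sequence of steering birth arrows followed by crosses that push $\zeta^i$ away step by step), then an independent subsequence and the second Borel--Cantelli lemma. The only difference is that you invoke the renewal-point structure of the first ancestor to decouple the events, whereas the paper gets independence more simply by choosing indices with $|\sigma_j-\sigma_i|>2K+1$, so that the prescribed patterns live in disjoint time slabs of the graphical representation; your extra machinery is harmless but not needed.
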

\begin{proof}
 Subdividing~$(\sigma_i - 2K - 1, \sigma_i)$ into~$2K + 1$ intervals~$I_1, I_2, \ldots, I_{2K + 1}$ of length one going backward in time, the probability of~$C_i$ is bounded from below by the probability that
\begin{itemize}
 \item the distinguished particle~$\zeta_s^0$ does not jump between time~$\sigma_i$ and time~$\sigma_i - 2K - 1$, \vspace*{4pt}
 \item there are no crosses at and a single birth arrow pointing at~$\zeta_s^i$ whose tail is further away from the distinguished particle in the intervals~$I_2, I_4, \ldots, I_{2K}$, \vspace*{4pt}
 \item there are no birth arrows pointing at and one cross at~$\zeta_s^i$ in the intervals~$I_3, I_5, \ldots, I_{2K + 1}$.
\end{itemize}
 See the right-hand side of Figure~\ref{fig:ACi} for a picture of this more particular event.
 A direct calculation using the rate of occurrence of the birth arrows and crosses gives
\begin{equation}
\label{eq:Ci}
\begin{array}{rcl}
  P (C_i) & \n \geq \n & P (\poisson ((2K + 1)(\beta + 1)) = 0) \vspace*{4pt} \\
          & \n   \n & \hspace*{10pt} \times \,(P (\poisson (\beta / N) = 1) \times P (\poisson (\beta + 1 - \beta / N) = 0))^K \vspace*{4pt} \\
          & \n   \n & \hspace*{10pt} \times \,(P (\poisson (1) = 1) \times P (\poisson (\beta) = 0))^K \vspace*{4pt} \\
          & \n = \n & (\beta / N)^K \,\exp (- (4K + 1)(\beta + 1)) > 0. \end{array}
\end{equation}
 Note also that, when~$|\sigma_j - \sigma_i| > 2K + 1$, the two events~$A_i$ and~$A_j$ are determined by disjoint parts of the graphical representation and so are independent.
 In particular, there exists a subsequence of events that are independent and have a positive probability~\eqref{eq:Ci} therefore, like in the proof of~Lemma~\ref{lem:Ai}, we can conclude using the second Borel-Cantelli lemma.
\end{proof} \\ \\
 Lemmas~\ref{lem:Ai}, \ref{lem:Bi} and~\ref{lem:Ci} imply the existence of a sequence of space-time points, that we again denote by~$(x_i, \sigma_i)$ to simplify the notation, so that the events~$A_i \cap B_i \cap C_i$ occur.
 This, together with~Lemma~\ref{lem:Neuhauser}, implies that the number of distinguished particles~$\zeta_s^i$ that do not coalesce can be made arbitrarily large.
 More precisely, we have the following lemma.
\begin{lemma}
\label{lem:transience}
 Let~$d \geq 3$.
 Then, for all integers~$n > 0$,
 $$ \begin{array}{l} \lim_{s \to - \infty} \inf_{0 \leq k < l \leq n} \norm{\zeta_s^{i_k} - \zeta_s^{i_l}} = \infty \quad \hbox{for some} \quad 0 = i_0 < i_1 < i_2 < \ldots < i_n. \end{array} $$
\end{lemma}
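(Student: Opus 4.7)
The plan is to build the subsequence $0 = i_0 < i_1 < \cdots < i_n$ inductively, adding one index at a time so that the newly selected distinguished particle is initially spatially far from all previously selected ones at a common time, and then stays far from them forever by Lemma~\ref{lem:Neuhauser}.

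First I would fix $n$ and choose $K$ large enough that $n(CK^{-1/10} + 2CK^{-3/32}) < 1/2$, where $C$ is the constant in Lemma~\ref{lem:Neuhauser}. By Lemmas~\ref{lem:Ai}, \ref{lem:Bi} and~\ref{lem:Ci}, there are infinitely many indices $i$ along which $A_i \cap B_i \cap C_i$ holds for this~$K$. I would set $i_0 = 0$ and then inductively assume that $i_0 < \cdots < i_k$ have been selected so that, for every pair $0 \leq p < q \leq k$, the norm $\norm{\zeta^{i_p}_s - \zeta^{i_q}_s}$ diverges as~$s \to -\infty$.

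To select $i_{k+1}$, I would pick $i > i_k$ in the subsequence above so that $\sigma_i$ is small enough that, by the induction hypothesis applied at the time $s^* = \sigma_i - 2K - 1$,
\[
\norm{\zeta^{i_p}_{s^*} - \zeta^0_{s^*}} \geq 4K \qquad \hbox{for all} \qquad 1 \leq p \leq k.
\]
Since the event $C_i$ forces $\norm{\zeta^i_{s^*} - \zeta^0_{s^*}} \leq 2K$, the triangle inequality yields $\norm{\zeta^i_{s^*} - \zeta^{i_p}_{s^*}} \geq K$ for every $0 \leq p \leq k$. I would then apply Lemma~\ref{lem:Neuhauser} to each of the~$k+1$ pairs $(\zeta^i, \zeta^{i_p})$, viewing them as distinguished particles started at $(\zeta^i_{s^*}, s^*)$ and $(\zeta^{i_p}_{s^*}, s^*)$ respectively and using the fact that the distinguished particle started at a space-time point on an existing trajectory coincides with that trajectory backward in time. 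A union bound over the~$k+1$ failure events gives total failure probability less than $1/2$, so with probability at least $1/2$ the distances $\norm{\zeta^i_s - \zeta^{i_p}_s}$ are bounded below by $(s^* - s)^{1/8}$ for all $s \leq s^*$ and all $p \leq k$, which tends to infinity as $s \to -\infty$.

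Since this argument applies to infinitely many candidate indices $i$, each yielding success with conditional probability at least $1/2$, a Borel-Cantelli argument---made rigorous by restricting to candidates whose comparison windows $[\sigma_i - 2K - 1, \sigma_i]$ are sufficiently spaced that the relevant failure events become independent, or by exploiting positive correlation of the underlying survival events---produces a valid choice of $i_{k+1}$. Iterating until $k = n - 1$ completes the induction. The principal difficulty I expect is this last decoupling step: the distinguished particles for different $i$ share portions of the graphical representation through overlapping ancestry going backward in time, so justifying independence (or a suitable correlation inequality) across candidate indices is the main technical hurdle, while the geometric separation at the initial time is handled cleanly by combining $C_i$ with the induction hypothesis.
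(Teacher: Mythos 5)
Your proposal is correct and follows essentially the same route as the paper: induction on $n$, using the event $C_j$ together with the triangle inequality and the induction hypothesis (separation $\geq 4K$ at time $\sigma_j - 2K - 1$) to get pairwise separation $\geq K$, then Lemma~\ref{lem:Neuhauser} with a union bound to conclude that a new non-coalescing particle is found with positive probability among the infinitely many candidates. The decoupling concern you flag at the end is also left implicit in the paper, which simply asserts success ``after a geometric number of trials.''
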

\begin{proof}
 We prove the result by induction.
 Assuming that the statement of the lemma is true, for all integers~$K > 0$, there exists a time~$s_K > - \infty$ such that
 $$ \begin{array}{l} \norm{\zeta_s^{i_k} - \zeta_s^{i_l}} \geq 4K \quad \hbox{for all} \quad 0 \leq k < l \leq n \quad \hbox{and} \quad s < s_K. \end{array} $$
 In particular, for all~$j$ large enough that~$\sigma_j < s_K$, because the event~$C_j$ occurs,
 $$ \norm{\zeta_{\sigma_j - 2K - 1}^j - \zeta_{\sigma_j - 2K - 1}^{i_k}} \geq K \quad \hbox{for all} \quad 0 \leq k \leq n $$
 therefore it follows from Lemma~\ref{lem:Neuhauser} that the probability that the distinguished particle~$\zeta_s^j$ does not coalesce with any of the other~$n + 1$ distinguished particles is larger than
 $$ \begin{array}{l}
      P (\lim_{s \to - \infty} \norm{\zeta_s^j - \zeta_s^{i_k}} = \infty \ \hbox{for all} \ 0 \leq k \leq n) \vspace*{4pt} \\ \hspace*{25pt} =
      1 - P (\lim_{s \to - \infty} \norm{\zeta_s^j - \zeta_s^{i_k}} \neq \infty \ \hbox{for some} \ 0 \leq k \leq n) \vspace*{4pt} \\ \hspace*{25pt} \geq
      1 - (n + 1) CK^{- 1/10} - 2 (n + 1) CK^{- 3/32} \end{array} $$
 which can be made positive by choosing~$K$ sufficiently large.
 In particular, after a geometric number of trials, we find a particle~$\zeta_s^j$ whose distance from the other~$n + 1$ particles goes to infinity.
 This shows that the statement of the lemma still holds replacing~$n$ with~$n + 1$.
\end{proof} \\ \\
 To complete the proof of the theorem, we let
 $$ \Theta_t = \{\zeta_0^i : i \geq 0 \ \hbox{and} \ \sigma_i > 0 \} $$
 be the set of sites occupied by the distinguished particles of the~$(x_i, \sigma_i)$ that are contained in the positive time half-space.
 According to Lemma~\ref{lem:transience}, this set can be made arbitrarily large by choosing~$t$ large enough.
 In particular, starting from a translation invariant distribution with a positive density of particles of type~1, it follows from Harris~\cite[Lemma~9.14]{Harris_1976} that
 $$ \begin{array}{l} \lim_{t \to \infty} P (\Theta_t \cap \{x : \xi_0 (x) = 1 \} \neq \varnothing) = 1. \end{array} $$
 This shows that~\eqref{eq:Bi1} occurs with probability one in the~$t \to \infty$ limit which, in turn, implies that~\eqref{eq:Ai2} occurs with probability one in the limit, and proves the theorem.


\bibliographystyle{plain}
\bibliography{biblio.bib}

\end{document}